



\documentclass{amsart}
\usepackage[margin=1.00in]{geometry}


\usepackage{amsfonts}
\usepackage{eufrak}
\usepackage{setspace}
\usepackage{graphicx}

	\usepackage{mathrsfs}
\usepackage{hyperref}
\usepackage{graphicx}
\usepackage{amsmath, amsthm, amscd, amsfonts, amssymb, graphicx, color}
 \usepackage{url}
\usepackage{enumerate}
 \makeatletter
\let\reftagform@=\tagform@
\def\tagform@#1{\maketag@@@{(\ignorespaces\textcolor{blue}{#1}\unskip\@@italiccorr)}}
\renewcommand{\eqref}[1]{\textup{\reftagform@{\ref{#1}}}}
\makeatother
\usepackage{hyperref}
\hypersetup{colorlinks=true, linkcolor=red, anchorcolor=green,
citecolor=cyan, urlcolor=red, filecolor=magenta, pdftoolbar=true}

\usepackage{epsfig}        
\usepackage{epic,eepic}       

\setcounter{MaxMatrixCols}{10}

\newtheorem{theorem}{Theorem}
\theoremstyle{plain}

\newtheorem{corollary}{Corollary}

\newtheorem{definition}{Definition}
\newtheorem{example}{Example}

\newtheorem{lemma}{Lemma}

\newtheorem{proposition}{Proposition}
\newtheorem{remark}{Remark}

\numberwithin{equation}{section}

\DeclareMathOperator{\spe}{sp}

\DeclareMathOperator{\diag}{diag}
\def\etal{et al.\,}

  \def\etal{et al.\,}


\begin{document}

 \title[Operator Jensen's  inequality for operator superquadratic ]
{Operator Jensen's  inequality for operator superquadratic functions}
\author[M.W. Alomari]{Mohammad W. Alomari }

\address {Department of Mathematics, Faculty of Science and Information
	Technology, Jadara University,  P.O. Box 733, Irbid, P.C. 21110, Jordan.}
\email{mwomath@gmail.com}

\date{\today}
\subjclass[2010]{47A63, 47A56}

\keywords{Operator supequadratic, Operator convex, Selfadjoint, Jensen inequality}
\begin{abstract}
	In this work, an operator superquadratic function (in operator sense) for positive Hilbert space operators is defined. Several examples with some important properties together with some observations which are related to the operator convexity are pointed out.     Equivalent statements of a non-commutative version of Jensen's inequality for operator superquadratic function are established.   A generalization of the main result to any positive unital linear map is also provided. 
\end{abstract}


\setcounter{page}{1}

\maketitle

\section{Introduction}

Let $\mathscr{B}\left( \mathscr{H}\right) $ be the Banach algebra
of all bounded linear operators defined on a complex Hilbert space
$\left( \mathscr{H};\left\langle \cdot ,\cdot \right\rangle
\right)$  with the identity operator  $1_\mathscr{H}$ in
$\mathscr{B}\left( \mathscr{H}\right) $. A bounded linear operator
$A$ defined on $\mathscr{H}$ is selfadjoint if and only if $
\left\langle {Ax,x} \right\rangle \in \mathbb{R}$ for all $x\in
\mathscr{H}$. The spectrum of an operator $A$ is the set of all
$\lambda \in \mathbb{C}$  for which the operator $\lambda I - A$
does not have a bounded linear operator inverse, and is denoted by
$\spe\left(A\right)$. Consider the real vector space
$\mathscr{B}\left( \mathscr{H}\right)_{sa}$ of self-adjoint
operators on $ \mathscr{H}$ and its positive cone
$\mathscr{B}\left( \mathscr{H}\right)^{+}$ of positive operators
on $\mathscr{H}$. Also, $\mathscr{B}\left(
\mathscr{H}\right)_{sa}^I$ denotes the convex set of bounded
self-adjoint operators on the Hilbert space $\mathscr{H}$ with
spectra in a real interval $I$. A partial order is naturally
equipped on $\mathscr{B}\left( \mathscr{H}\right)_{sa}$ by
defining $A\le B$ if and only if $B-A\in   \mathscr{B}\left(
\mathscr{H}\right)^{+}$.  We write $A
> 0$ to mean that $A$ is a strictly positive operator, or
equivalently, $A \ge 0$ and $A$ is invertible. When $\mathscr{H} =
\mathbb{C}^n$, we identify $\mathscr{B}\left( \mathscr{H}\right)$
with the algebra $\mathfrak{M}_{n\times n}$ of $n$-by-$n$ complex
matrices. Then, $\mathfrak{M}^{+}_{n\times n}$ is just the cone of
$n$-by-$n$ positive semidefinite matrices.

A linear map  is defined to be   $\Phi:\mathscr{B}\left(\mathscr{H} \right)\to \mathscr{B}\left(\mathscr{K} \right)$ which preserves additivity and
homogeneity, i.e.,  $\Phi \left(\lambda_1 A +\lambda_2 B \right)=
\lambda_1\Phi \left( A  \right)+ \lambda_2\Phi \left( B \right)$
for any $\lambda_1,\lambda_2 \in \mathbb{C}$  and $A, B \in \mathscr{B}\left(\mathscr{H} \right)$. The linear map is positive $\Phi:\mathscr{B}\left(\mathscr{H} \right)\to \mathscr{B}\left(\mathscr{K} \right)$ if it preserves the operator order, i.e., if $A\in \mathscr{B}^+\left(\mathscr{H} \right)$ then $\Phi\left(A\right)\in \mathscr{B}^+\left(\mathscr{K} \right)$, and in this case we write
$\mathcal{B} [\mathscr{B}\left(\mathscr{H} \right),\mathscr{B}\left(\mathscr{K} \right)] $. Obviously, a positive linear map $\Phi$ preserves the order relation, namely
$A\le B \Longrightarrow \Phi\left(A\right)\le \Phi\left(B\right)$ and preserves the adjoint operation $\Phi\left(A^*\right)=\Phi\left(A\right)^*$.
Moreover, $\Phi$ is said to be  unital  if it preserves the identity operator,   in this case we write
$\mathcal{B}_n [\mathscr{B}\left(\mathscr{H}
\right),\mathscr{B}\left(\mathscr{K} \right)] $.

A linear map $\Phi:\mathscr{B}\left(\mathscr{H} \right)\to \mathscr{B}\left(\mathscr{K} \right)$  induces  another map 
\begin{align*}
{\rm{id}} \otimes \Phi :\mathbb{C}^{k \times k}  \otimes \mathscr{B}\left( \mathscr{H} \right) \to \mathbb{C}^{k \times k}  \otimes \mathscr{B}\left( \mathscr{K} \right), 
\end{align*}
in a natural way. If $\mathbb{C}^{k \times k}  \otimes \mathscr{B}\left( \mathscr{H} \right)$ is identified with the $C^*$-algebra $\mathscr{B}^{k\times k}\left( \mathscr{H} \right)$ of $k\times k$--matrices  with entries in $\mathscr{B}\left( \mathscr{H} \right)$ then
${\rm{id}} \otimes \Phi $ act as:
\begin{align*}
\left( {\begin{array}{*{20}c}
	{A_{11} } &  \cdots  & {A_{1k} }  \\
	\vdots  &  \ddots  &  \vdots   \\
	{A_{k1} } &  \cdots  & {A_{kk} }  \\
	\end{array}} \right) \mapsto \left( {\begin{array}{*{20}c}
	{\Phi \left( {A_{11} } \right)} &  \cdots  & {\Phi \left( {A_{1k} } \right)}  \\
	\vdots  &  \ddots  &  \vdots   \\
	{\Phi \left( {A_{k1} } \right)} &  \cdots  & {\Phi \left( {A_{kk} } \right)}  \\
	\end{array}} \right).
\end{align*}
We say that $\Phi$  is $k$-positive if ${\rm{id}} \otimes \Phi $  is a positive map, and $\Phi$ is called completely positive if $\Phi$  is $k$-positive for all $k$.

\subsection{Superquadratic functions}
A function $f:J\to \mathbb{R}$ is called convex iff
\begin{align}
f\left( {t\alpha +\left(1-t\right)\beta} \right)\le tf\left(
{\alpha} \right)+ \left(1-t\right) f\left( {\beta}
\right),\label{eq1.1}
\end{align}
for all points $\alpha,\beta \in J$ and all $t\in [0,1]$. If $-f$
is convex then we say that $f$ is concave. Moreover, if $f$ is
both convex and concave, then $f$ is said to be affine.

Geometrically, for two point $\left(x,f\left(x\right)\right)$ and
$\left(y,f\left(y\right)\right)$  on the graph of $f$ are on or
below the chord joining the endpoints  for all $x,y \in I$, $x <
y$. In symbols, we write
\begin{align*}
f\left(t\right)\le   \frac{f\left( y \right)  - f\left( x \right)
}{y-x}   \left( {t-x} \right)+ f\left( x \right)
\end{align*}
for any $x \le t \le y$ and $x,y\in J$.

Equivalently, given a function $f : J\to \mathbb{R}$, we say that
$f$ admits a support line at $x \in J $ if there exists a $\lambda
\in \mathbb{R}$ such that
\begin{align}
f\left( t \right) \ge f\left( x \right) + \lambda \left( {t - x}
\right) \label{eq1.2}
\end{align}
for all $t\in J$.

The set of all such $\lambda$ is called the subdifferential of $f$
at $x$, and it's denoted by $\partial f$. Indeed, the
subdifferential gives us the slopes of the supporting lines for
the graph of $f$. So that if $f$ is convex then $\partial f(x) \ne
\emptyset$ at all interior points of its domain.

From this point of view  Abramovich \etal \cite{SJS} extend the
above idea for what they called superquadratic functions. Namely,
a function $f:[0,\infty)\to \mathbb{R}$ is called superquadratic
provided that for all $x\ge0$ there exists a constant $C_x\in
\mathbb{R}$ such that
\begin{align}
f\left( t \right) \ge f\left( x \right) + C_x \left( {t - x}
\right) + f\left( {\left| {t - x} \right|} \right)\label{eq1.3}
\end{align}
for all $t\ge0$. We say that $f$ is subquadratic if $-f$ is
superquadratic. Thus, for a superquadratic function we require
that $f$ lie above its tangent line plus a translation of $f$
itself. If $f$ is differentiable and satisfies $f(0) = f^{\prime}(0) = 0$, then one sees easily
that the $C_x$ appearing in the definition is necessarily $f^{\prime}(x)$, (see \cite{AJS}).

Prima facie, superquadratic function  looks  to be stronger than
the convex function itself but if $f$ takes negative values then it
maybe considered as a weaker function. Therefore, if $f$ is
superquadratic and non-negative, then $f$ is convex and increasing
\cite{SJS} (see also \cite{A}).

Moreover,   the following result holds for superquadratic
function.

\begin{lemma}\cite{SJS}
	\label{lemma1}Let $f$ be superquadratic function. Then
	\begin{enumerate}
		\item $f\left(0\right)\le 0$
		
		\item If $f$ is differentiable and $f(0)=f^{\prime}(0)=0$, then $C_x=f^{\prime}(x)$  for all $x\ge0$.
		
		\item If $f(x)\ge0$ for all $x\ge 0$, then $f$ is convex and $f(0)=f^{\prime}(0)=0$.
	\end{enumerate}
\end{lemma}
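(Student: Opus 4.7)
The strategy is to read off each conclusion from the defining inequality \eqref{eq1.3} by making well-chosen substitutions for $t$ and $x$, and for part (3) to additionally invoke the fact cited just before the lemma that a non-negative superquadratic function is convex.

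For item (1), the key step is to set $t=x$ in \eqref{eq1.3}. The linear term $C_x(t-x)$ vanishes identically, while $f(|t-x|)=f(0)$, so the inequality collapses to $f(x)\ge f(x)+f(0)$, yielding $f(0)\le 0$ immediately.

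For item (2), assume $f$ is differentiable with $f(0)=f'(0)=0$. I would rewrite \eqref{eq1.3} as $f(t)-f(x)\ge C_x(t-x)+f(|t-x|)$ and form the difference quotient by dividing by $t-x$. For $t\to x^+$ the right-hand side becomes $C_x+f(t-x)/(t-x)\to C_x+f'(0)=C_x$, giving $f'(x)\ge C_x$. For $t\to x^-$ the divisor flips the sign of the inequality, and the quotient $f(|t-x|)/(t-x)=-f(x-t)/(x-t)$ tends to $-f'(0)=0$, giving $f'(x)\le C_x$. The two bounds combine to $C_x=f'(x)$.

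For item (3), assume $f\ge 0$ on $[0,\infty)$. Convexity is already contained in the quoted result on non-negative superquadratic functions, and $f(0)=0$ follows by combining $f(0)\le 0$ from part (1) with the standing hypothesis $f(0)\ge 0$. To reach $f'(0)=0$ (taking $f$ differentiable at $0$) I would combine two substitutions into \eqref{eq1.3}: putting $t=0$ with $x>0$ produces $0\ge 2f(x)-C_x x$, i.e.\ $C_x\ge 2f(x)/x$; and repeating the one-sided difference-quotient limit from (2), which is still available because only $f(0)=0$ is needed to send $f(t-x)/(t-x)\to f'_+(0)$, yields $f'(x)\ge C_x+f'_+(0)$. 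Combining and letting $x\to 0^+$, convexity forces $f(x)/x\to f'_+(0)$ and $f'(x)\to f'_+(0)$, producing $f'_+(0)\ge 3f'_+(0)$, hence $f'_+(0)\le 0$. Coupled with the obvious reverse inequality $f'_+(0)\ge 0$ coming from the fact that $f$ attains its minimum at $0$, this forces $f'(0)=0$. The main obstacle is precisely this last step: the limiting arguments hinge on having convexity in hand and on avoiding a circular use of part (2), so the order of the three items in the proof must be respected carefully.
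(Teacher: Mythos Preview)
The paper does not supply a proof of this lemma; it is quoted verbatim from \cite{SJS} with no accompanying argument, so there is nothing in the paper to compare your proposal against.

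Evaluated on its own merits, your argument is sound. Part (1) is immediate and part (2) is the standard two-sided squeeze using $f(0)=f'(0)=0$ to kill the remainder $f(|t-x|)/(t-x)$. In part (3) you correctly avoid circularity by not invoking part (2) directly---since $f'(0)=0$ is precisely what you must establish---and instead re-derive the one-sided bound $f'_+(x)\ge C_x+f'_+(0)$ using only $f(0)=0$. Combining this with $C_x\ge 2f(x)/x$ from the substitution $t=0$ and passing to the limit $x\to 0^+$ is legitimate, but you should be explicit that ``$f'(x)$'' here means the right derivative $f'_+(x)$, whose existence and right-continuity are guaranteed by convexity; no global differentiability is assumed in part (3), so the conclusion $f'(0)=0$ is to be read as $f'_+(0)=0$.
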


The next result gives a sufficient condition when convexity
(concavity) implies super(sub)quaradicity.

\begin{lemma}\cite{SJS}
	\label{lemma2}If $f^{\prime}$ is convex (concave) and
	$f(0)=f^{\prime}(0)=0$, then is super(sub)quadratic. The converse
	of is not true.
\end{lemma}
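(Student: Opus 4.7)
The plan is to verify the defining inequality \eqref{eq1.3} directly, with the candidate $C_x = f'(x)$ (suggested by the hypothesis $f(0)=f'(0)=0$ together with Lemma \ref{lemma1}(2)). The only structural fact I need to extract from convexity of $f'$ is \emph{super-additivity} on $[0,\infty)$: for every $a,b\ge 0$,
\begin{align*}
f'(a)+f'(b)\;\le\;f'(a+b).
\end{align*}
This follows immediately from convexity, since writing $a$ and $b$ as convex combinations of $a+b$ and $0$ yields $f'(a)\le \tfrac{a}{a+b}f'(a+b)$ and $f'(b)\le \tfrac{b}{a+b}f'(a+b)$, and the hypothesis $f'(0)=0$ annihilates the endpoint terms before I add the two estimates.

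With super-additivity in hand, fix $x\ge 0$ and define
\begin{align*}
h(t)\;:=\;f(t)-f(x)-f'(x)(t-x)-f(|t-x|),\qquad t\ge 0.
\end{align*}
Since $f(0)=0$, direct evaluation gives $h(x)=0$. Away from the corner $t=x$ the function $h$ is differentiable, with
\begin{align*}
h'(t)=\begin{cases}
f'(t)-f'(x)-f'(t-x), & t>x,\\
f'(t)-f'(x)+f'(x-t), & 0\le t<x.
\end{cases}
\end{align*}
For $t>x$, super-additivity applied with $(a,b)=(x,t-x)$ gives $f'(x)+f'(t-x)\le f'(t)$, hence $h'(t)\ge 0$. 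For $t<x$, super-additivity applied with $(a,b)=(t,x-t)$ gives $f'(t)+f'(x-t)\le f'(x)$, hence $h'(t)\le 0$. Thus $h$ is non-increasing on $[0,x]$ and non-decreasing on $[x,\infty)$, and since $h(x)=0$ I conclude $h(t)\ge 0$ throughout $[0,\infty)$, which is exactly \eqref{eq1.3} with $C_x=f'(x)$. The concave/subquadratic statement follows by applying the result to $-f$.

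The main technical subtlety is the non-smoothness at $t=x$ caused by the absolute value inside $f(|t-x|)$: $h'$ has a jump there, so I argue monotonicity on $[0,x]$ and $[x,\infty)$ separately and glue, which is legitimate because $h$ is continuous at $x$ (both one-sided limits equal $h(x)=0$). For the failure of the converse, I would exhibit (or cite from \cite{SJS}) a superquadratic function on $[0,\infty)$ with $f(0)=f'(0)=0$ whose derivative fails to be convex; a natural candidate is a perturbation of $t\mapsto t^2$ (for which \eqref{eq1.3} is an equality with ample slack) by a small non-negative $C^1$ bump supported away from $0$ that leaves \eqref{eq1.3} intact but destroys the convexity of $f'$.
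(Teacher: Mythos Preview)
The paper does not supply a proof of this lemma; it is quoted verbatim from \cite{SJS}. Your argument is correct and self-contained. The reduction of ``$f'$ convex with $f'(0)=0$'' to super-additivity of $f'$ on $[0,\infty)$ is exactly the right structural fact, and the monotonicity analysis of the auxiliary function $h(t)=f(t)-f(x)-f'(x)(t-x)-f(|t-x|)$ on the two half-lines, glued by continuity at $t=x$, cleanly yields \eqref{eq1.3} with $C_x=f'(x)$. This is essentially the approach of \cite{SJS} as well.

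One small remark: for the failure of the converse your sketch (perturbing $t^2$ by a bump that spoils convexity of $f'$ while preserving \eqref{eq1.3}) is plausible but not a proof as written; since the paper already attributes the full statement---including the non-converse---to \cite{SJS}, simply citing the explicit counterexample there would be the cleanest way to close that part.
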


\begin{remark}
	In general, non-negative subquadratic functions does not imply concavity. In other words,  there exists a subquadratic function which is convex. For example, $f(x)=x^p$, $x\ge 0$ and $1\le  p \le2$ is subquadratic and convex.
\end{remark}

Among others,  Abramovich \etal \cite{SJS} proved that the
inequality
\begin{align}
f\left( {\int {\varphi d\mu } } \right) \le   \int {f\left(
	{\varphi \left( s \right)} \right)-f\left( {\left| {\varphi \left(
			s \right) - \int {\varphi d\mu } } \right|} \right)d\mu \left( s
	\right)}\label{eq.SJS}
\end{align}
holds for all probability measures $\mu$ and all nonnegative,
$\mu$-integrable functions $\varphi$ if and only if $f$ is
superquadratic.  For more details the reader may refer to
\cite{A}, \cite{SIP}, and \cite{KLPP}.

\subsection{Operator convexity and Jensen inequality }
Let $f$ be a real-valued function defined on $J$. A $k$-th order
divided difference of $f$ at distinct points $x_0,\cdots,x_k$ in $J$
may be defined recursively by
\begin{align*}
\left[ {x_i } \right]f  &= f\left( {x_i } \right) \\
\left[ {x_0 ,x_1 , \ldots ,x_k } \right]f &= \frac{{\left[ {x_1 , \ldots ,x_k } \right]f - \left[ {x_0 , \ldots ,x_{k - 1} } \right]f}}{{x_k  - x_0 }}. 
\end{align*}
For instance, the first $3$-divided differences are given as
follows:
\begin{align*}
\left[ {x_0 } \right]f &= f\left( {x_0 } \right) \,\qquad \qquad \qquad\qquad\qquad {\rm {if}}\qquad k=0,\\
\left[ {x_0 ,x_1 } \right]f &= \frac{{\left[ {x_1 } \right]f -
		\left[ {x_0 } \right]f}}{{x_1  - x_0 }} \,\,\, \qquad
\qquad\qquad\,\, {\rm {if}}\qquad k=1,
\\
\left[ {x_0 ,x_1 ,x_2 } \right]f &= \frac{{\left[ {x_1 ,x_2 } \right]f - \left[ {x_0 ,x_1 } \right]f}}{{x_2  - x_0 }}    \qquad\qquad {\rm {if}}\qquad k=2.
\end{align*}

A function $f: J\to \mathbb{R}$ is said to be matrix monotone of
degree $n$ or $n$-monotone, if for every $A,B\in \mathfrak{M}_{n\times n}$, it is true that $A\le B$
$\Longleftrightarrow$ $f\left(A\right)\le \left(B\right)$.
Similarly, $f$ is said to be operator monotone If $f$ is
$n$-monotone for all $n\in\mathbb{N}$. Also, $f$ is called
operator convex if it is matrix convex ($n$-convex for all $n$);
i.e., if for every pair of selfadjoint operators
$A,B\in \mathfrak{M}_{n\times n}$ we have
\begin{align*}
f\left(\lambda A+ \left(1-\lambda\right)B\right) \le \lambda
f\left(A\right)+\left(1-\lambda\right)f\left(B\right)
\end{align*}
for all $\lambda\in \left[0,1\right]$. If the inequality is
reversed then $f$ is called operator concave. In case we have general Hilbert space $\mathscr{H}$, the above definition holds for every pair of bounded selfadjoint operators $A$ and $B$ in $\mathscr{B}\left(\mathscr{H}\right)$, whose spectra ontianed in $J$.  For more details see
\cite{FK} and the recent survey \cite{PC}.

In 1955, Bendat and Sherman \cite{BS} have shown that $f$ is
operator convex on the open interval $(-1, 1)$ if and only if it
has a (unique) representation
\begin{align*}
f\left( t \right) = \beta _0  + \beta _1 t + \frac{1}{2}\beta _2
\int_{ - 1}^1 {\frac{{t^2 }}{{1 - \alpha t}}d\mu \left( t \right)}
\end{align*}
for $\beta_2\ge0$ and some probability measure  $\mu$ on $[-1, 1]$
(it could be Borel measure). In particular, $f$ must be analytic
with $f(0)=\beta_0$, $f^{\prime}(0)=\beta_1$ and
$f^{\prime\prime}(0)=\beta_2$.\\

We recall that the celebrated L\"{o}wner-Heinz inequality reads that:
\begin{lemma}
	Let $A,B\in \mathscr{B}\left(  \mathscr{H}\right)^+$ such that $A\ge B$, then $A^r\ge B^r$ for all $r\in \left[0,1\right]$.
\end{lemma}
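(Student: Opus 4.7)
My plan is to prove the L\"{o}wner-Heinz inequality via the integral representation of the power function. The cases $r=0$ and $r=1$ are immediate, so I fix $r\in(0,1)$; by replacing $A,B$ with $A+\epsilon 1_\mathscr{H}$ and $B+\epsilon 1_\mathscr{H}$ and letting $\epsilon\to 0^{+}$ at the end, I may assume $A,B$ are strictly positive. The starting point is the classical scalar identity
\[
t^r=\frac{\sin(r\pi)}{\pi}\int_0^\infty \frac{t}{t+\lambda}\,\lambda^{r-1}\,d\lambda,\qquad t>0,
\]
which follows from residue calculus or a beta-function substitution. Applying the continuous functional calculus termwise yields the operator-valued version
\[
A^r=\frac{\sin(r\pi)}{\pi}\int_0^\infty \bigl[1_\mathscr{H}-\lambda(A+\lambda 1_\mathscr{H})^{-1}\bigr]\lambda^{r-1}\,d\lambda,
\]
with an analogous formula for $B^r$, the improper integrals converging on the (bounded) joint spectra of $A$ and $B$.

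The crux is then a pointwise-in-$\lambda$ comparison of the two integrands. Using the standard fact that $X\mapsto X^{-1}$ reverses the order on the strictly positive cone, i.e.\ $0<X\le Y$ implies $Y^{-1}\le X^{-1}$, applied with $X=B+\lambda 1_\mathscr{H}\le A+\lambda 1_\mathscr{H}=Y$, I obtain
\[
\lambda(A+\lambda 1_\mathscr{H})^{-1}\le \lambda(B+\lambda 1_\mathscr{H})^{-1},
\]
and therefore
\[
1_\mathscr{H}-\lambda(A+\lambda 1_\mathscr{H})^{-1}\ge 1_\mathscr{H}-\lambda(B+\lambda 1_\mathscr{H})^{-1}.
\]
Since $\lambda^{r-1}\,d\lambda$ is a positive measure on $(0,\infty)$, integrating this inequality — tested against $\langle\,\cdot\,x,x\rangle$ for arbitrary $x\in\mathscr{H}$, so that only scalar integration is invoked — preserves the order and gives $A^r\ge B^r$.

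The main obstacle will be justifying the two technical ingredients, neither of which is hard individually but both of which must be handled cleanly. First, the order-reversal of inversion itself: this reduces to the scalar monotonicity $0<s\le t\Rightarrow t^{-1}\le s^{-1}$ by noting that $X\le Y$ is equivalent to $Y^{-1/2}XY^{-1/2}\le 1_\mathscr{H}$, inverting inside the spectrum of the sandwiched operator, and re-conjugating. Second, the convergence of the operator-valued integral: splitting at $\lambda=1$, the kernel $I-\lambda(A+\lambda 1_\mathscr{H})^{-1}$ is bounded in norm and $\lambda^{r-1}$ is integrable near $0$, while at infinity one checks that the kernel behaves like $A(A+\lambda 1_\mathscr{H})^{-1}=O(\lambda^{-1})$, producing overall decay $\lambda^{r-2}$ which is integrable for $r<1$. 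An alternative route — showing that the set $\{r\in[0,1]:A\ge B\Rightarrow A^r\ge B^r\}$ is closed and closed under midpoints, then invoking density of dyadic rationals — is possible, but the midpoint step there requires a nontrivial norm inequality that I find less transparent than the integral argument above.
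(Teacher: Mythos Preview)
Your proof is correct: the integral representation of $t^r$ for $r\in(0,1)$, combined with the operator anti-monotonicity of inversion on the strictly positive cone, is one of the standard routes to the L\"{o}wner--Heinz inequality, and you have handled the technical points (reduction to invertible operators, convergence of the integral at $0$ and at $\infty$, and the order-reversal of $X\mapsto X^{-1}$) cleanly.

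There is, however, nothing in the paper to compare your argument against. The paper merely \emph{recalls} the L\"{o}wner--Heinz inequality as a known result, stating it without proof and directing the reader to \cite{B}, \cite{FMPS} and \cite{HP1} for details; it is used only as background, not derived. So your proposal supplies a complete proof where the paper gives none.
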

On the other hand the mapping $t \mapsto t^p$ $(p>1)$ is not
operator monotone, for more details see \cite{B}, \cite{FMPS} and
\cite{HP1}.

The classical Jensen's inequality for reals states that
\begin{align}
\label{eq1.5}f\left( {\sum\limits_{j = 1}^n {\lambda _j x_j } }
\right) \le \sum\limits_{j = 1}^n {\lambda _j f\left( {x_j }
	\right)}
\end{align}
valid for all real valued convex function $f$ defined on $[m,M]$,
for every $x_1,\cdots,x_n\in [m,M]$ and every positive real
numbers $\lambda_j$ $(1\le j\le n)$ such that
$\sum_{j=1}^n{\lambda_j}=1$.

The inequality \eqref{eq1.5} would be rephrases under matrix
situation by putting
\begin{align*}
A = \left( {\begin{array}{*{20}c}
	{x_1 } & {} & 0  \\
	{} &  \ddots  & {}  \\
	0 & {} & {x_n }  \\
	\end{array}} \right)\qquad {\rm{and}} \qquad
x = \left( {\begin{array}{*{20}c}
	{\sqrt {\lambda _1 } }  \\
	\vdots   \\
	{\sqrt {\lambda _n } }  \\
	\end{array}} \right)
\end{align*}
then the classical Jensen  inequality \eqref{eq1.5} is expressed as
\begin{align}
f\left( {\left\langle {Ax,x} \right\rangle } \right) \le \left(
{f\left( A \right)x,x} \right)\label{eq1.6}
\end{align}
which is one of the operator version of the classical Jensen's
inequality, see \cite{FMPS}.

Kadison \cite{RK} established his famous
non-commutative version  of the previous inequality where he
proved that for every selfadjoint matrix $A$ the inequality
\begin{align}
\Phi^2\left(A\right)  \le \Phi\left( {A^2} \right)\label{eq1.7}
\end{align}
for every positive unital linear map $\Phi:\mathfrak{M}_{n\times
	n}\left(\mathbb{C}\right)\to \mathfrak{M}_{k\times
	k}\left(\mathbb{C}\right)$.

This inequality was generalized later  by Davis  in \cite{D},
where he obtained that this is true when $f$ is a matrix convex
function and $\Phi$  is completely positive; i.e.,
\begin{align}
f\left(\Phi\left(A\right)\right)  \le \Phi\left( {f\left(A\right)}
\right)\label{eq1.8}
\end{align}
The latter restriction about complete positivity of $\Phi$  was
removed by Choi \cite{C} who proved that \eqref{eq1.7}  remains
valid for all positive unital linear maps   provided $f$ is matrix
convex.

Another noncommutative operator version of the classical Jensen's
inequality under the situation that
\begin{align*}
A = \left( {\begin{array}{*{20}c}
	{x_1 } & {} & 0  \\
	{} &  \ddots  & {}  \\
	0 & {} & {x_n }  \\
	\end{array}} \right)\qquad {\rm{and}} \qquad
V=\left( {\begin{array}{*{20}c}
	{\sqrt{\lambda_1}} & {\cdots} & 0  \\
	{\vdots} &  \ddots  & {}  \\
	\sqrt{\lambda_n} & {} & {0 }  \\
	\end{array}} \right),
\end{align*}
then the classic Jensen's inequality is expressed as
\begin{align}
\label{eq1.9}f\left( {V^* AV} \right) \le V^* f\left( A \right)V.
\end{align}
The inequality \eqref{eq1.9} was proved by Davis in
\cite{D} for all $A\in \mathscr{B}\left(\mathscr{H}\right)$ and
every isometry $C$. However, a more informative version was
extended by Hansen-Pedersen \cite{HP1} as follows:
\begin{theorem}
	\label{theorem1}    Let $\mathscr{H}$ and $\mathscr{K}$ be Hilbert
	space. Let $f$ be
	a real valued continuous function on an interval $I$. Let $A$ and $A_j$ be selfadjoint operators on
	$\mathscr{H}$ with spectra contained in $I $ $(j = 1, 2,\cdots,k)$. Then the following conditions are mutually
	\begin{enumerate}
		\item $f$ is operator convex on $I$ and $f(0)\le0$.
		\item $f\left( {C^* AC} \right) \le C^* f\left( A \right)C$, for every $A\in \mathscr{B}\left(\mathscr{H}\right)$ and contraction $C\in \mathscr{B}\left(\mathscr{H}\right)$; i.e., $C^*C\le1_{\mathscr{K}}$.
		\item $f\left( {\sum\limits_{j = 1}^n {C_j^* A_j C_j } } \right) \le \sum\limits_{j = 1}^n {C_j^* f\left( {A_j } \right)C_j } $, for all $A_j\in \mathscr{B}\left(\mathscr{H}\right)$ and $C_j\in \mathscr{B}\left(\mathscr{H}\right)$ with $\sum_{j=1}^nC_j^*C_j\le 1_{\mathscr{H}}$, $(j = 1, 2,\cdots,k)$.
		\item $f\left( {PAP} \right) \le Pf\left( A \right)P$, for every $A\in \mathscr{B}\left(\mathscr{H}\right)$ and  projection $P$.
	\end{enumerate}
\end{theorem}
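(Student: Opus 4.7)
My plan is to establish the cyclic chain $(1)\Rightarrow(3)\Rightarrow(2)\Rightarrow(4)\Rightarrow(1)$, with the slight reshuffle $(1)\Rightarrow(2)\Rightarrow(3)\Rightarrow(4)\Rightarrow(1)$ turning out to be the most transparent. Throughout, the unifying device is the passage to $2\times 2$ (and eventually $(n+1)\times(n+1)$) operator block matrices, together with the observation that $f$ applied to a block-diagonal operator is just block-diagonal with $f$ applied entrywise.

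For $(1)\Rightarrow(2)$, the plan is the Halmos unitary dilation. Given a contraction $C$, form the unitary
\begin{equation*}
U=\begin{pmatrix} C & (1_\mathscr{H}-CC^*)^{1/2}\\ (1_\mathscr{H}-C^*C)^{1/2} & -C^*\end{pmatrix},\qquad J=\begin{pmatrix} 1_\mathscr{H} & 0\\ 0 & -1_\mathscr{H}\end{pmatrix},
\end{equation*}
so $U$ and $JU$ are both unitary, and set $\tilde A=\mathrm{diag}(A,0)$. A direct computation shows
\begin{equation*}
\tfrac{1}{2}\bigl(U^*\tilde A\,U+(JU)^*\tilde A\,(JU)\bigr)=\mathrm{diag}\bigl(C^*AC,\,(1_\mathscr{H}-CC^*)^{1/2}A(1_\mathscr{H}-CC^*)^{1/2}\bigr),
\end{equation*}
and then operator convexity applied to this midpoint identity, together with $f(U^*\tilde A U)=U^*f(\tilde A)U$ and $f(\tilde A)=\mathrm{diag}(f(A),f(0)\,1_\mathscr{H})$, yields on comparing $(1,1)$-blocks the inequality
\begin{equation*}
f(C^*AC)\le C^*f(A)C+(1_\mathscr{H}-C^*C)^{1/2}f(0)(1_\mathscr{H}-C^*C)^{1/2}.
\end{equation*}
Here the hypothesis $f(0)\le 0$ is exactly what one needs to discard the last term and obtain (2).

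For $(2)\Rightarrow(3)$, the idea is to view $\sum_{j=1}^n C_j^*A_jC_j$ as a single sandwich by a contraction. Take the column
\begin{equation*}
V=(C_1,C_2,\dots,C_n)^T:\mathscr{H}\to\mathscr{H}^{n}, \qquad V^*V=\sum_{j=1}^n C_j^*C_j\le 1_\mathscr{H},
\end{equation*}
so $V$ is a contraction, and set $\mathbf{A}=\mathrm{diag}(A_1,\dots,A_n)$. Then $V^*\mathbf{A}V=\sum_j C_j^*A_jC_j$ while $V^*f(\mathbf{A})V=\sum_j C_j^*f(A_j)C_j$, and (2) applied to $V$ and $\mathbf{A}$ immediately gives (3). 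The implication $(3)\Rightarrow(4)$ is then the special case $n=1$, $A_1=A$, $C_1=P$, using $P^*P=P^2=P\le 1_\mathscr{H}$.

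The main obstacle is $(4)\Rightarrow(1)$, where we must extract both operator convexity and the sign condition $f(0)\le 0$ from the projection inequality alone. The plan is to use a rank-one block projection. Taking $P=0$ in (4) with $A=0$ gives $f(0)\le 0$ immediately. For operator convexity, given selfadjoint $A,B$ with spectra in $I$ and $\lambda\in[0,1]$, put
\begin{equation*}
T=\begin{pmatrix} A & 0\\ 0 & B\end{pmatrix},\qquad \tilde U=\begin{pmatrix} \sqrt{\lambda}\,1_\mathscr{H} & -\sqrt{1-\lambda}\,1_\mathscr{H}\\ \sqrt{1-\lambda}\,1_\mathscr{H} & \sqrt{\lambda}\,1_\mathscr{H}\end{pmatrix},\qquad P=\tilde U\,\mathrm{diag}(1_\mathscr{H},0)\,\tilde U^*,
\end{equation*}
so $P$ is a projection on $\mathscr{H}\oplus\mathscr{H}$. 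A short calculation gives $\tilde U^*PTP\tilde U=\mathrm{diag}(\lambda A+(1-\lambda)B,\,0)$ and $\tilde U^*Pf(T)P\tilde U=\mathrm{diag}(\lambda f(A)+(1-\lambda)f(B),\,0)$. Applying (4) and conjugating by $\tilde U$, the inequality $f(PTP)\le Pf(T)P$ becomes
\begin{equation*}
\mathrm{diag}\bigl(f(\lambda A+(1-\lambda)B),\,f(0)\,1_\mathscr{H}\bigr)\le \mathrm{diag}\bigl(\lambda f(A)+(1-\lambda)f(B),\,0\bigr),
\end{equation*}
and comparing the two diagonal blocks recovers precisely operator convexity and (redundantly) $f(0)\le 0$. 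The delicate point in this last step is the clean evaluation of $f$ on the rank-one block $PTP$, which is handled by diagonalizing the scalar $2\times 2$ projection first and then applying the functional calculus on the resulting block-diagonal operator.
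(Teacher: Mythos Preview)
The paper does not prove this theorem. Theorem~\ref{theorem1} is quoted in the introduction as a known background result of Hansen and Pedersen (the citation \cite{HP1}), and no argument for it appears anywhere in the paper; the paper's own contributions begin only in Section~\ref{sec2}. So there is no ``paper's own proof'' to compare against.

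That said, your proposal is essentially the classical Hansen--Pedersen argument, and it is correct in outline. One genuine slip: in the step $(1)\Rightarrow(2)$ you average $U^*\tilde A U$ with $(JU)^*\tilde A(JU)$, but since $J=\diag(1_\mathscr{H},-1_\mathscr{H})$ commutes with $\tilde A=\diag(A,0)$ you get $(JU)^*\tilde A(JU)=U^*J\tilde A J U=U^*\tilde A U$, so no averaging occurs and the off-diagonal blocks are not killed. The fix is to use $V=UJ$ rather than $JU$: then $V=\begin{pmatrix} C & -(1-CC^*)^{1/2}\\ (1-C^*C)^{1/2} & C^*\end{pmatrix}$ and a direct check gives $\tfrac12\bigl(U^*\tilde A U+V^*\tilde A V\bigr)=\diag\bigl(C^*AC,\,(1-CC^*)^{1/2}A(1-CC^*)^{1/2}\bigr)$ as you claimed. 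A second, smaller point: in $(2)\Rightarrow(3)$ your column $V:\mathscr{H}\to\mathscr{H}^n$ is not an element of $\mathscr{B}(\mathscr{H})$, so to invoke (2) literally you should pad $V$ to a square contraction $\tilde V\in\mathscr{B}(\mathscr{H}^n)$ (put $V$ in the first block-column and zeros elsewhere) and then read off the $(1,1)$ block; this is routine. With these two corrections the cycle $(1)\Rightarrow(2)\Rightarrow(3)\Rightarrow(4)\Rightarrow(1)$ goes through.
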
 

Here we give some popular examples of operator convex and concave
function \cite{PC}.
\begin{enumerate}
	\item For each $p\in \left[0,1\right]$, $t^p$    is operator concave on $\left[0,\infty\right)$.
	
	\item The function $t\log t$  is operator convex on $\left[0,\infty\right)$.
\end{enumerate}

This work is organized as follows: after this introduction; in Section \ref{sec2}, the
operator superquadratic functions for positive Hilbert space operators are
introduced and elaborated. Several examples with some important properties together
with some observations related to operator convexity are pointed
out.  
In Section \ref{sec3}, A Jensen type inequality is proved.
Equivalent statements of a non-commutative version in of Jensen's
inequality for operator superquadratic are also established.   Finally, several trace inequalities for superquadratic functions (in ordinary sense) are provided as well.   
\section{Operator superquadratic function}\label{sec2}

\begin{definition}
	\label{def1}
	Let $I=[0,M]\subseteq \left[0,\infty\right)$.      A real valued continuous function $f (t)$ on an interval $I$ is said to be operator superquadratic function if
	\begin{multline}
	f\left( {\alpha A + \left( {1 - \alpha } \right)B} \right)\\
	\le \alpha \left[ {f\left( A \right) - f\left( {\left( {1 - \alpha } \right)\left| {A - B} \right|} \right)} \right] + \left( {1 - \alpha } \right)\left[ {f\left( B \right) - f\left( {\alpha \left| {A - B} \right|} \right)} \right]\label{eq2.1}
	\end{multline}
	holds for all $\alpha\in \left[0,1\right]$ and for every positive   operators $A$ and $B$ on a Hilbert space $\mathscr{H}$ whose
	spectra are contained in $I\subset [0,\infty)$.  We say that $f$
	is operator subquadratic function if $-f$ is operator superquadratic function. Moreover, if the equality holds in \eqref{eq2.1}, in this case  we say that $f$ is operator quadratic function.
\end{definition}

It's convenient to note that; if $f$ satisfies \eqref{eq2.1}, then
with $A = x$ and $B = y$ (two positive scalars) one can obtain the Jensen inequality for superquadratic functions and if $f$ is continuous (which is necessary to define an operator functions), then \eqref{eq.SJS} would imply that $f$ is superquadratic function. Thus, we observe that:
\begin{corollary}
	\label{cor1}If $f$ is an operator superquadratic function then $f$ is a real superquadratic function.
\end{corollary}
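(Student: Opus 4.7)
The plan follows the author's own outline: specialize the operator inequality \eqref{eq2.1} to scalar multiples of the identity, and then invoke the Abramovich--Jameson--Sinnamon characterization of superquadratic functions via \eqref{eq.SJS}.

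Concretely, I would fix arbitrary $x, y \in I$ and $\alpha \in [0,1]$, and choose $A = x\cdot 1_{\mathscr{H}}$ and $B = y\cdot 1_{\mathscr{H}}$ in \eqref{eq2.1}. These are positive operators whose spectra are $\{x\}, \{y\} \subseteq I$, so Definition \ref{def1} applies. Under this choice every operator appearing in \eqref{eq2.1} is a scalar multiple of the identity (for instance $|A-B| = |x-y|\cdot 1_{\mathscr{H}}$), and, since $f$ is continuous, its continuous functional calculus on scalar multiples of the identity agrees with pointwise evaluation. Thus \eqref{eq2.1}, read off the coefficient of $1_{\mathscr{H}}$, reduces to the scalar inequality
\begin{multline*}
f(\alpha x + (1-\alpha)y) \le \alpha\bigl[f(x) - f((1-\alpha)|x-y|)\bigr] \\ + (1-\alpha)\bigl[f(y) - f(\alpha|x-y|)\bigr],
\end{multline*}
which is precisely the two-atom instance of \eqref{eq.SJS}, obtained with the probability measure $\mu = \alpha\delta_x + (1-\alpha)\delta_y$ and $\varphi(s) = s$.

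To close, I would appeal to the equivalence recalled in the introduction right after \eqref{eq.SJS}: the refined Jensen-type integral inequality is equivalent to $f$ being superquadratic. Strictly speaking, the previous step yields \eqref{eq.SJS} only for two-atom measures, so one has to upgrade this to arbitrary probability measures. This is done by a standard argument: induct on the number of atoms (grouping two atoms at a time into a single barycenter and re-applying the two-atom inequality) to handle any finitely supported measure, and then approximate an arbitrary Borel probability measure weakly by such measures, using continuity of $f$ to pass to the limit.

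The only real obstacle is this last extension step, and even it can be avoided: one may derive the pointwise definition \eqref{eq1.3} directly from the two-atom scalar inequality -- for instance by specializing one of the variables and exploiting the existence of a support constant $C_x$ guaranteed for continuous functions satisfying the refined two-point Jensen inequality, as carried out in \cite{SJS}. This bypasses the integral form entirely and gives the cleanest route to the conclusion that $f$ is superquadratic in the sense of \eqref{eq1.3}.
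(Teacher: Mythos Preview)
Your proposal is correct and follows essentially the same approach as the paper. The paper's own argument is just the sentence preceding the corollary: specialize \eqref{eq2.1} to scalars $A=x$, $B=y$, obtain the scalar Jensen-type inequality for superquadratic functions, and then invoke the equivalence with \eqref{eq.SJS} from \cite{SJS}. You have simply been more explicit than the paper about the passage from the two-point inequality to the full characterization, which the paper leaves implicit.
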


Let $f(t)=\alpha t+\beta$,  then $f$ is  operator subquadratic on
every bounded interval for all $\alpha,\beta \ge 0$. Indeed, we
have
\begin{align*}
&f\left( {\frac{{A + B}}{2}} \right) + f\left( {\frac{{\left| {A - B} \right|}}{2}} \right) - \frac{{f\left( A \right) + f\left( B \right)}}{2}\\
&= \left[ {\alpha \frac{{A + B}}{2} + \beta } \right] + \left[ {\alpha \frac{{\left| {A - B} \right|}}{2} + \beta } \right] - \frac{{\alpha A + \beta  + \alpha B + \beta }}{2} \\
&= \alpha \frac{{\left| {A - B} \right|}}{2} + \beta  \ge 0.
\end{align*}
Moreover,  $g(t)=-f(t)$ is operator superquadratic.

One can easily
seen that the function $t\mapsto t^3$ is not operator
superquadratic nor operator subquadratic function. Simply, assume $f(t)=t^3$,  $t\in [0,\infty)$ and
let
\begin{align*}
A = \left( {\begin{array}{*{20}c}
	2 & 1  \\
	1 & 1  \\
	\end{array}} \right)\qquad\text{and}\qquad B = \left( {\begin{array}{*{20}c}
	1 & 0  \\
	0 & 0  \\
	\end{array}} \right)
\end{align*}
then,
\begin{align*}
\frac{{A^3  + B^3 }}{2} - \left( {\frac{{A + B}}{2}} \right)^3  -
\left( {\frac{{\left| {A - B} \right|}}{2}} \right)^3 &=
\frac{1}{4}\left( {\begin{array}{*{20}c}
	{ 9} & 7  \\
	7 & 5  \\
	\end{array}} \right) \mathop {\nleq }\limits_{\not >}   0.
\end{align*}
However, the map $t\mapsto t^2$ is non-negative operator convex  on
$(0,\infty)$ and it is also  operator superquadratic  on
$(0,\infty)$. Indeed,  by \eqref{eq2.1} we have
\begin{align*}
&\left( {\alpha A + \left( {1 - \alpha } \right)B} \right)^2  \le \alpha A^2  + \left( {1 - \alpha } \right)B^2  - \alpha \left( {1 - \alpha } \right)^2 \left| {A - B} \right|^2  - \left( {1 - \alpha } \right)\alpha ^2 \left| {A - B} \right|^2  \\
&\Leftrightarrow  \alpha^2 A^2 + \left( {1 - \alpha } \right)^2B^2   + \alpha \left( {1 - \alpha } \right)\left( {AB + BA}
\right) \le \alpha A^2  + \left( {1 - \alpha } \right)B^2
\\
&\qquad\qquad\qquad\qquad\qquad\qquad\qquad\qquad\qquad\qquad\qquad- \left[ {\alpha \left( {1 - \alpha } \right)^2  + \left( {1 - \alpha } \right)\alpha ^2 } \right]\left( {A - B} \right)^2  \\
&\Leftrightarrow \alpha \left( {\alpha  - 1} \right)A^2  + \alpha \left( {\alpha  - 1} \right)B^2  + \alpha \left( {\alpha  - 1} \right)\left( {AB + BA} \right) \le \alpha \left( {\alpha  - 1} \right)\left( {A - B} \right)^2  \\
&\Leftrightarrow \alpha \left( {\alpha  - 1} \right)\left( {A +
	B } \right)^2 \le \alpha \left( {\alpha  - 1} \right)\left( {A -
	B} \right)^2   \\
&\Leftrightarrow \left( {A + B} \right)^2  \ge \left( {A - B}
\right)^2\qquad\qquad
{\rm{for  }}\,\,\alpha \left( {\alpha  - 1} \right) < 0\\
&\Leftrightarrow \left| {A + B} \right| \ge \left| {A - B} \right|
\qquad\qquad
g\left( t \right) = \sqrt t \,\,\,{\rm{is\,\, operator \,\,monotone}}    
\end{align*}
which is true  since $A,B>0$, and this proves that $t^2$ is operator superquadratic function.\\

From the definition of operator superquadratic function  we have
\begin{align}
f\left( {\alpha A + \left( {1 - \alpha } \right)B} \right)
\le \alpha \left[ {f\left( A \right) - f\left( {\left( {1 -
			\alpha }
		\right)\left| {A - B} \right|} \right)} \right] + \left( {1 -
	\alpha } \right)\left[ {f\left( B \right) - f\left( {\alpha \left|
		{A - B} \right|} \right)} \right]\label{eq2.2}
\end{align}
for any arbitrary positive  operators $A,B \in
\mathscr{B}\left(\mathscr{H}\right)$ and each $\alpha \in
\left[0,1\right]$.

In particular, by   setting $B= \left\langle {Ax,x} \right\rangle
1_{\mathscr{H}}$ in \eqref{eq2.1} we have
\begin{multline}
f\left( {\alpha A + \left( {1 - \alpha } \right)\left\langle
	{Ax,x} \right\rangle 1_{\mathscr{H}}} \right)
\le \alpha \left[ {f\left( A \right) - f\left( {\left( {1 - \alpha
		} \right)\left| {A - \left\langle {Ax,x} \right\rangle
			1_{\mathscr{H}}} \right|} \right)} \right] \\+ \left( {1 - \alpha }
\right)\left[ {f\left( \left\langle {Ax,x} \right\rangle   \right)
	- f\left( {\alpha \left| {A - \left\langle {Ax,x} \right\rangle
			1_{\mathscr{H}}} \right|} \right)} \right].\label{eq2.3}
\end{multline}
for each positive   operator $A \in
\mathscr{B}\left(\mathscr{H}\right)$ and all $\alpha \in
\left[0,1\right]$.  

From this point of view \eqref{eq2.3}, Kian early in \cite{K} and
then jointly  with Dragomir in \cite{KS}  proved   a finite
dimensional operator version of Jensen's inequality for
superquadratic functions (in ordinary sense) under the
interpretation that for $ A = \left( {\begin{array}{*{20}c}
	a & 0  \\
	0 & b  \\
	\end{array}} \right)$ and $x = \left( {\begin{array}{*{20}c}
	{\sqrt \lambda  }  \\
	{\sqrt {1 - \lambda } }  \\
	\end{array}} \right)$, then we have $ \left\langle {Ax,x} \right\rangle  = \lambda a + \left( {1 - \lambda } \right)b$
if follows that
\begin{align*}
\left| {A - \left\langle {Ax,x} \right\rangle } \right| = \left(
{\begin{array}{*{20}c}
	{\left( {1 - \lambda } \right)\left| {a - b} \right|} & 0  \\
	0 & {\lambda \left| {a - b} \right|}  \\
	\end{array}} \right).
\end{align*}
Therefore, as a matrix Jensen inequality for a superquadratic
function $f:\left[0 ,\infty\right)\to \mathbb{R}$ we have
\begin{align*}
f\left( {\left\langle {Ax,x} \right\rangle } \right) \le
\left\langle {f\left( A \right)x,x} \right\rangle  + \left\langle
{f\left( {\left| {A - \left\langle {Ax,x} \right\rangle } \right|}
	\right)x,x} \right\rangle.
\end{align*}
This result was generalized for positive unital linear maps, as follows:
\begin{theorem}\emph{(\cite{KS}, \cite{MA})}
	\label{thm3}Let $A\in \mathscr{B}\left(\mathscr{H} \right)$ be a
	positive  operator and  $\Phi:\mathscr{B}\left(\mathscr{H}
	\right)\to \mathscr{B}\left(\mathscr{K} \right)$  be a positive unital
	linear map. If   $f:\left[0,\infty\right)\to \mathbb{R}$
	is super(sub)quadratic function, then we have
	\begin{align*}
	\left\langle {\Phi \left( {f\left( A \right)}
		\right)x,x} \right\rangle \ge (\le)  f\left( {\left\langle {\Phi
			\left( {A} \right)x,x} \right\rangle } \right)   + \left\langle
	{\Phi \left( {f\left( {\left| {A - \left\langle
					{\Phi\left(A\right)x,x} \right\rangle 1_{\mathscr{H}} } \right|}
			\right)} \right)x,x} \right\rangle
	\end{align*}
	for every $x\in \mathscr{K}$ with $\|x\|=1$.
\end{theorem}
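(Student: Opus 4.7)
The plan is to reduce the operator/map statement to the pointwise scalar defining inequality for superquadratic functions, use the spectral theorem to lift it to an operator inequality on $\mathscr{H}$, then push it through $\Phi$ and pair with $x$.

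Concretely, set $s := \langle \Phi(A)x, x\rangle$. Because $A \ge 0$, $\Phi$ is positive, and $\|x\|=1$, the scalar $s$ is nonnegative, so by superquadraticity of $f$ there is a constant $C_s \in \mathbb{R}$ with
\begin{align*}
f(t) \ge f(s) + C_s(t-s) + f(|t-s|) \qquad \text{for all } t \ge 0.
\end{align*}
Since $\spe(A) \subseteq [0,\infty)$, the functional calculus applied to this pointwise scalar inequality yields the operator inequality on $\mathscr{H}$
\begin{align*}
f(A) \ge f(s)\, 1_{\mathscr{H}} + C_s\bigl(A - s\, 1_{\mathscr{H}}\bigr) + f\bigl(|A - s\, 1_{\mathscr{H}}|\bigr),
\end{align*}
where I note that $|A - s\,1_{\mathscr{H}}|$ is a positive operator, so its spectrum lies in the domain of $f$.

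Now apply the positive unital linear map $\Phi$ to both sides; positivity preserves the order, linearity distributes over the three summands, and unitality turns $f(s) 1_{\mathscr{H}}$ into $f(s) 1_{\mathscr{K}}$. Pairing with the unit vector $x \in \mathscr{K}$ gives
\begin{align*}
\langle \Phi(f(A))x, x\rangle \ge f(s) + C_s\bigl(\langle \Phi(A)x,x\rangle - s\bigr) + \bigl\langle \Phi\bigl(f(|A - s\,1_{\mathscr{H}}|)\bigr)x, x\bigr\rangle.
\end{align*}
Finally, by the choice $s = \langle \Phi(A)x,x\rangle$ the linear term in $C_s$ vanishes, and substituting back for $s$ in the remaining terms yields exactly the claimed inequality. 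The subquadratic case is immediate by applying the superquadratic version to $-f$.

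The argument has essentially no technical obstacle; the only point requiring care is the legitimacy of applying functional calculus with $f$ to $|A - s\, 1_{\mathscr{H}}|$, which is resolved by observing that $\spe(|A - s\,1_{\mathscr{H}}|) \subseteq [0, \infty)$ since $A \ge 0$ and $s \ge 0$ (more precisely $\spe(|A - s\,1_{\mathscr{H}}|) \subseteq [0, \max(s, \|A\| - s)]$), which lies inside the domain $[0,\infty)$ of $f$. Note also that the theorem asserts only the weak (scalar) form $\langle \cdot\, x, x\rangle$, so no operator convexity/superquadraticity of $f$ in the sense of Definition \ref{def1} is needed, just the scalar superquadraticity used in \eqref{eq1.3}.
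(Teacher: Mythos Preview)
Your argument is correct. The key steps---fixing $s=\langle\Phi(A)x,x\rangle$, invoking the defining inequality \eqref{eq1.3} at $s$, lifting it to an operator inequality via the continuous functional calculus, and then applying $\Phi$ and pairing with $x$ so that the $C_s$-term cancels---are all sound, and your remark about $\spe(|A-s\,1_{\mathscr{H}}|)\subseteq[0,\infty)$ correctly disposes of the only technical wrinkle.

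Note, however, that the paper does \emph{not} actually supply a proof of Theorem~\ref{thm3}: it is quoted as a known result from \cite{KS} and \cite{MA}, with only the motivating $2\times 2$ computation preceding it. So there is no ``paper's own proof'' to compare against here. That said, your approach is precisely the standard one used in those references (and is essentially the operator-valued analogue of the proof of \eqref{eq.SJS}): choose the base point so that the linear correction term vanishes after averaging. There is nothing to add or repair.
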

The above inequality and other consequences were proved later by the
first author of this paper in \cite{MA} where different approach
is used.

\begin{proposition}
	\label{prp1}    Let $f$ be an operator superquadratic function on $I$.
	Then
	\begin{enumerate}
		\item $f\left(0\right)\le 0$.
		
		\item If $f$ is non-negative, then $f$ is operator convex   and $f(0) =0$.
	\end{enumerate}
\end{proposition}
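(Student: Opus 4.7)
The plan is to deduce both parts directly from specializations of the defining inequality \eqref{eq2.1}, with no need for any deeper machinery.

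For (1), I would substitute $A = B = 0$ into \eqref{eq2.1}. The spectrum of the zero operator is $\{0\} \subset I$, so the substitution is legitimate, and both sides then become functions of $f(0)$ alone. The left side reduces to $f(0)$, while on the right each bracket $f(0) - f((1-\alpha)\cdot 0)$ and $f(0) - f(\alpha\cdot 0)$ collapses to $0$, giving $f(0) \le 0$ for every $\alpha \in [0,1]$. (Alternatively, one could cite Corollary \ref{cor1} together with Lemma \ref{lemma1}(1), but the operator-level argument is equally cheap.)

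For (2), I would first combine non-negativity of $f$ with the bound $f(0) \le 0$ just proved to conclude $f(0) = 0$. To establish operator convexity, fix positive operators $A, B$ with spectra in $I = [0, M]$ and let $\alpha \in [0,1]$. Since $A, B \ge 0$ and both are bounded above by $M 1_{\mathscr{H}}$, the self-adjoint operator $A - B$ satisfies $-M 1_{\mathscr{H}} \le A - B \le M 1_{\mathscr{H}}$, so $|A - B|$ is a positive operator with spectrum in $[0, M]$; consequently $(1-\alpha)|A - B|$ and $\alpha|A - B|$ are positive with spectra in $I$, and $f$ may be applied to them. Non-negativity of $f$ then makes the correction terms $\alpha f((1-\alpha)|A - B|)$ and $(1-\alpha) f(\alpha|A - B|)$ on the right-hand side of \eqref{eq2.1} non-negative, so they may be dropped to yield
\[ f\bigl(\alpha A + (1-\alpha) B\bigr) \le \alpha f(A) + (1-\alpha) f(B), \]
which is precisely operator convexity on $I$.

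The only mildly technical point is the spectral bookkeeping needed to apply $f$ to the auxiliary operators $(1-\alpha)|A - B|$ and $\alpha|A - B|$; once that is in place, both assertions follow immediately. I do not anticipate any genuine obstacle here, as the proposition is essentially an observation about how the extra $f$-terms on the right of \eqref{eq2.1} behave when $f$ is non-negative or the operators degenerate to zero.
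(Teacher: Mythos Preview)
Your proposal is correct and follows essentially the same approach as the paper: for (1) you substitute $A=B=0$ into the defining inequality, and for (2) you drop the non-negative correction terms from \eqref{eq2.1} to obtain operator convexity and combine $f(0)\le 0$ with $f\ge 0$ to force $f(0)=0$. Your extra spectral bookkeeping for $|A-B|$ is a welcome clarification that the paper leaves implicit.
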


\begin{proof}
	\begin{enumerate}
		\item Setting $A=B=0$ in \eqref{eq2.3} we ge that $f\left(0\right)\le 0$.
		
		\item Since $f$ is continuous and non-negative, then from \eqref{eq2.3} we have
		\begin{align*}
		f\left( {\alpha A + \left( {1 - \alpha } \right)B} \right) &\le \alpha \left[ {f\left( A \right) - f\left( {\left( {1 - \alpha } \right)\left| {A - B} \right|} \right)} \right] + \left( {1 - \alpha } \right)\left[ {f\left( B \right) - f\left( {\alpha \left| {A - B} \right|} \right)} \right]
		\\
		&\le   \alpha f\left( A \right)   + \left( {1 - \alpha } \right)f\left( B \right)
		\end{align*}
		which means that $f$ is operator convex. To show that  $f(0) =0$, we have by part (1) $f(0)\le 0$ and by assumption $f(x)$ is non-negative i.e., $f(x)\ge0$ for all $x\in I$. In particular, $f(0)\ge0$. Thus, $f(0)=0$.
	\end{enumerate}
\end{proof}

\begin{example}
	Let $f(t)=t^{-1}$, then $f$ is non-negative operator convex  on $(0,\infty)$. However, $f$ is not operator superquadratic function on $(0,\infty)$. For instance, let
	\begin{align*}
	A = \left( {\begin{array}{*{20}c}
		3 & 0  \\
		0 & 1  \\
		\end{array}} \right)\qquad \text{and}\qquad B = \left( {\begin{array}{*{20}c}
		1 & 0  \\
		0 & 2  \\
		\end{array}} \right)
	\end{align*}
	Applying  \eqref{eq2.4} for $f(t)=t^{-1}$, we get
	\begin{align*}
	\frac{{A^{ - 1}  + B^{ - 1} }}{2} - \left( {\frac{{A + B}}{2}} \right)^{ - 1}  - \left( {\frac{{\left| {A - B} \right|}}{2}} \right)^{ - 1}
	&= \frac{1}{{12}}\left( {\begin{array}{*{20}c}
		8 & 0  \\
		0 & 9  \\
		\end{array}} \right) - \frac{2}{{12}}\left( {\begin{array}{*{20}c}
		3 & 0  \\
		0 & 4  \\
		\end{array}} \right) - \frac{6}{{12}}\left( {\begin{array}{*{20}c}
		1 & 0  \\
		0 & 2  \\
		\end{array}} \right) \\
	&= \frac{1}{{12}}\left( {\begin{array}{*{20}c}
		{ - 4} & 0  \\
		0 & -11 \\
		\end{array}} \right)   <0
	\end{align*}

\end{example}

\begin{proposition}
	\label{prp2}Let $f$ be a real valued continuous function defined on an interval $\left[0,\infty\right)$. If $f$ is operator  convex  and non-positive then $f$ is operator superquadratic function.
\end{proposition}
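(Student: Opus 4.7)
The plan is to derive the operator superquadratic inequality directly from operator convexity by showing that the extra correction terms in \eqref{eq2.1} are automatically non-positive when $f$ itself is non-positive. So the argument has two simple ingredients that I would assemble as follows.

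First, I would fix positive operators $A,B$ with spectra in $[0,\infty)$ and $\alpha \in [0,1]$, and invoke the operator convexity of $f$ to write
\begin{align*}
f\bigl(\alpha A + (1-\alpha)B\bigr) \le \alpha f(A) + (1-\alpha) f(B).
\end{align*}
Comparing this with the desired inequality \eqref{eq2.1}, it suffices to prove that
\begin{align*}
\alpha f\bigl((1-\alpha)|A-B|\bigr) + (1-\alpha) f\bigl(\alpha|A-B|\bigr) \le 0.
\end{align*}

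Second, I would verify this residual inequality using the assumption that $f \le 0$ pointwise on $[0,\infty)$. The operators $(1-\alpha)|A-B|$ and $\alpha|A-B|$ are both positive, hence selfadjoint with spectra in $[0,\infty)$. By the continuous functional calculus, if $f(t) \le 0$ for every $t$ in the spectrum of a selfadjoint operator $X \ge 0$, then $f(X) \le 0$ as an operator. Therefore both $f\bigl((1-\alpha)|A-B|\bigr)$ and $f\bigl(\alpha|A-B|\bigr)$ are non-positive operators, and since $\alpha,1-\alpha \ge 0$, their convex combination above is non-positive.

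Combining the two bounds yields
\begin{align*}
f\bigl(\alpha A + (1-\alpha)B\bigr)
&\le \alpha f(A) + (1-\alpha) f(B) \\
&\le \alpha\bigl[f(A) - f((1-\alpha)|A-B|)\bigr] + (1-\alpha)\bigl[f(B) - f(\alpha|A-B|)\bigr],
\end{align*}
which is exactly \eqref{eq2.1}. I do not anticipate a serious obstacle here: the only substantive point is the functional-calculus step that converts the pointwise sign $f \le 0$ into the operator-order statement $f(X) \le 0$ for positive $X$, and this is completely standard. The continuity assumption on $f$, already built into the definition of operator superquadratic, is precisely what makes this functional calculus legitimate.
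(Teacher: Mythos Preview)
Your argument is correct and follows the same strategy as the paper: use operator convexity for the main inequality, then discard the correction terms because $f\le 0$ forces $f(X)\le 0$ for any positive $X$. The only difference is that the paper verifies just the midpoint case $\alpha=\tfrac12$ of \eqref{eq2.1}, whereas you establish the inequality for every $\alpha\in[0,1]$ and make the functional-calculus step explicit; in that sense your write-up is actually the more complete of the two.
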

\begin{proof}
	Since $f$ is operator convex, then
	\begin{align*}
	\frac{{f\left( A \right) + f\left( B \right)}}{2}  -f\left( {\frac{{A + B}}{2}} \right) \ge0.
	\end{align*}
	But also $f$ is non-positive, so that
	\begin{align*}
	\frac{{f\left( A \right) + f\left( B \right)}}{2}  -    f\left( {\frac{{A + B}}{2}} \right)- f\left( {\frac{{\left| {A - B} \right|}}{2}} \right) \ge   - f\left( {\frac{{\left| {A - B} \right|}}{2}} \right) \ge0
	\end{align*}
	which means that $f$ is operator superquadratic function.
\end{proof}

\begin{example}
	Let $f\left(t\right)=t\log\left(t\right)$, $t\in \left[0,\infty\right)$ it well known that $f$ operator convex. Clearly, $f$ is negative for all $t\in \left(0,1\right) \subseteq \left[0,\infty\right)$. Therefore,    $f\left(t\right)=t\log\left(t\right)$ is  operator superquadratic function for all $t\in \left(0,1\right)$.
\end{example}

\begin{proposition}
	\label{prp3}Let $f$ be a real valued continuous function defined on an interval $\left[0,\infty\right)$. If $f$ is operator  concave  and non-negative then $f$ is operator subquadratic.
\end{proposition}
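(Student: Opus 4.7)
The strategy is to reduce immediately to Proposition \ref{prp2} by passing to the negative of $f$. Recall from Definition \ref{def1} that $f$ is operator subquadratic precisely when $-f$ is operator superquadratic, so it suffices to verify that $-f$ satisfies the hypotheses of Proposition \ref{prp2}.

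First, I would observe that if $f$ is operator concave on $[0,\infty)$, then $-f$ is operator convex on $[0,\infty)$; this is immediate from the definitions, since multiplying both sides of the concavity inequality
\[
f\!\left(\alpha A + (1-\alpha) B\right) \ge \alpha f(A) + (1-\alpha) f(B)
\]
by $-1$ reverses the inequality. Second, if $f(t) \ge 0$ for all $t \in [0,\infty)$ (interpreted at the operator level, i.e.\ $f(A) \ge 0$ whenever $A \ge 0$ has spectrum in $[0,\infty)$), then $-f(A) \le 0$, so $-f$ is non-positive in the required operator sense.

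Now I would simply invoke Proposition \ref{prp2} with $-f$ in place of $f$: since $-f$ is operator convex and non-positive on $[0,\infty)$, that proposition yields that $-f$ is operator superquadratic. By the definition of operator subquadraticity, this is exactly the statement that $f$ is operator subquadratic, completing the argument.

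There is no real obstacle here; the only subtlety to flag is making sure the sign conventions line up correctly (concave $\leftrightarrow$ convex under negation, and non-negative $\leftrightarrow$ non-positive), and that Proposition \ref{prp2} is being applied to $-f$ rather than $f$. The proof is essentially a one-line corollary of Proposition \ref{prp2}.
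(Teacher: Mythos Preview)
Your proposal is correct. The paper, however, does not reduce to Proposition~\ref{prp2}; it gives a direct argument that simply mirrors the proof of Proposition~\ref{prp2} with the signs reversed: from operator concavity one has
\[
f\!\left(\frac{A+B}{2}\right) - \frac{f(A)+f(B)}{2} \ge 0,
\]
and since $f$ is non-negative one may add $f\!\left(\frac{|A-B|}{2}\right) \ge 0$ to obtain the (midpoint) operator subquadratic inequality. Your route via $-f$ is logically equivalent and slightly more economical, since it avoids repeating the calculation already done for Proposition~\ref{prp2}; the paper's version has only the minor advantage of being self-contained.
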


\begin{proof}
	Since $f$ is operator concave, then
	\begin{align*}
	f\left( {\frac{{A + B}}{2}} \right)- \frac{{f\left( A \right) + f\left( B \right)}}{2}   \ge0.
	\end{align*}
	But also $f$ is non-negative, so that
	\begin{align*}
	f\left( {\frac{{A + B}}{2}} \right) - \frac{{f\left( A \right) + f\left( B \right)}}{2}  + f\left( {\frac{{\left| {A - B} \right|}}{2}} \right) \ge    f\left( {\frac{{\left| {A - B} \right|}}{2}} \right) \ge0
	\end{align*}
	which means that $f$ is operator subquadratic.
\end{proof}

\begin{example}
	Let $f: \left(0,\infty\right)\to \left(0,\infty\right)$, given by
	$f(t)=t^{r}$, $r\in \left[0,1\right]$. Then $f$ is   operator
	subquadratic  on $(0,\infty)$. But $f$ is also operator concave,
	so that
	\begin{align*}
	\frac{{A^r  + B^r }}{2} \le  \left( {\frac{{A + B}}{2}} \right)^r
	\le\left( {\frac{{A + B}}{2}} \right)^r  + \left( {\frac{{\left|
				{A - B} \right|}}{2}} \right)^r
	\end{align*}
	which means $f$ is operator subquadratic  on $(0,\infty)$.
\end{example}

\section{Operator Jensen's inequality}\label{sec3}
In order to prove our results we need the following Lemmas:

\begin{lemma}\emph{(\cite{FMPS})}
	\label{lemma4}If $A \in \mathscr{B}\left(\mathscr{H}\right)$ is selfadjoint and $U$ is unitary, i.e. $U^*U = UU^* = 1_{\mathscr{H}}$, then
	$f (U^*AU) = U^* f (A)U$ for every $f$ continuous on the $\spe(A)$.
\end{lemma}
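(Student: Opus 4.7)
The plan is to prove this via the standard approximation argument that underlies the continuous functional calculus, reducing the claim first to polynomials and then extending by uniform approximation.

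First I would verify the identity for monomials by induction. The base case $n=1$ is trivial, and the inductive step uses
\begin{align*}
(U^*AU)^{n+1} = (U^*AU)^n \cdot U^*AU = U^*A^n U \cdot U^*AU = U^*A^n (UU^*) AU = U^*A^{n+1}U,
\end{align*}
where the cancellation $UU^* = 1_{\mathscr{H}}$ is precisely what unitarity provides (an isometry alone would not suffice here). By linearity this immediately yields $p(U^*AU) = U^* p(A) U$ for every complex polynomial $p$.

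Next I would note that $\spe(U^*AU) = \spe(A)$, since unitary conjugation is an invertible $\ast$-homomorphism and preserves the spectrum. Thus any continuous function $f$ on $\spe(A)$ is simultaneously a legitimate input to the continuous functional calculus for both $A$ and $U^*AU$. Since $\spe(A)$ is compact, the Weierstrass approximation theorem provides polynomials $p_k$ with $\|p_k - f\|_{\infty,\spe(A)} \to 0$. By the norm-continuity of the continuous functional calculus applied to the selfadjoint operators $A$ and $U^*AU$, we obtain $p_k(A) \to f(A)$ and $p_k(U^*AU) \to f(U^*AU)$ in operator norm.

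Finally I would use that the map $X \mapsto U^*XU$ is an isometry in the operator norm (because $U$ is unitary), hence norm-continuous, so $U^*p_k(A)U \to U^*f(A)U$. Passing to the limit in the polynomial identity $p_k(U^*AU) = U^* p_k(A) U$ yields $f(U^*AU) = U^* f(A) U$, as required. There is no real obstacle here beyond keeping the two sides of the approximation synchronized; the only subtlety worth flagging is that $UU^* = 1_{\mathscr{H}}$ (not merely $U^*U = 1_{\mathscr{H}}$) is needed already at the monomial step, so the statement genuinely requires $U$ to be unitary rather than only an isometry.
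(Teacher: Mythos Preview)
Your argument is correct and is precisely the standard route to this identity: establish it for polynomials by induction using $UU^*=1_{\mathscr{H}}$, then pass to continuous $f$ by Weierstrass approximation and the norm-continuity of the functional calculus. Note, however, that the paper does not supply its own proof of this lemma at all; it is quoted from \cite{FMPS} and used as a black box, so there is no in-paper argument to compare against. Your proof is exactly what one would expect to find in that reference.
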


\begin{lemma}\emph{(\cite{HP})}
	\label{lemma5}Define a unitary matrix $E_n=\diag\left(\xi,\xi^2,\cdots,\xi^{n-1},1\right)$ in $\mathfrak{M}_n\left(\mathbb{C}\right)\subset \mathscr{B}\left(\mathscr{H}^n\right)$, where $\xi =\exp\left(\frac{2\pi i}{n}\right)$. Then for each element $A=\left( a_{ij}\right) \in  \mathscr{B}\left(\mathscr{H}^n\right)$ we have
	\begin{align*}
	\frac{1}{n}\sum_{k=1}^n{E^{-k}AE^k}=\diag\left(a_{11},a_{22},\cdots,a_{nn}\right).
	\end{align*}
\end{lemma}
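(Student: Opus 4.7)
The plan is to verify the identity entry-by-entry, since $E$ is a diagonal (scalar) unitary and thus conjugation acts in a transparent way on the matrix entries $a_{ij}\in\mathscr{B}(\mathscr{H})$. Write $E=\diag(\xi,\xi^{2},\ldots,\xi^{n-1},\xi^{n})$ with $\xi^{n}=1$, so that $E^{k}=\diag(\xi^{k},\xi^{2k},\ldots,\xi^{nk})$ and $E^{-k}=(E^{k})^{*}=\diag(\xi^{-k},\xi^{-2k},\ldots,\xi^{-nk})$ for every $k\in\mathbb{Z}$. Because scalars commute with operators, the $(i,j)$ block of the product $E^{-k}AE^{k}$ equals
\begin{align*}
(E^{-k}AE^{k})_{ij}=\xi^{-ik}\,a_{ij}\,\xi^{jk}=\xi^{(j-i)k}a_{ij}.
\end{align*}

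Next, I would sum over $k=1,\ldots,n$ and use the standard orthogonality of the $n$-th roots of unity. For $i=j$ the exponent $(j-i)k$ vanishes, so each term equals $a_{ii}$ and the sum is $na_{ii}$. For $i\ne j$, put $m=j-i$ with $1\le |m|\le n-1$; then
\begin{align*}
\sum_{k=1}^{n}\xi^{mk}=\xi^{m}\cdot\frac{\xi^{mn}-1}{\xi^{m}-1}=0,
\end{align*}
because $\xi^{mn}=(\xi^{n})^{m}=1$ while $\xi^{m}\ne 1$. Therefore
\begin{align*}
\frac{1}{n}\sum_{k=1}^{n}E^{-k}AE^{k}=\diag(a_{11},a_{22},\ldots,a_{nn}),
\end{align*}
which is the desired identity.

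There is no real obstacle here beyond bookkeeping: the only subtlety is keeping track of the fact that the scalar matrix $E$ acts on the operator-valued matrix $A$ by ordinary left/right scalar multiplication in each block, which is exactly why the computation reduces to the classical orthogonality relation for roots of unity. One could alternatively phrase the proof as saying that the map $A\mapsto\frac{1}{n}\sum_{k=1}^{n}E^{-k}AE^{k}$ is the conditional expectation onto the commutant of $E$ inside $\mathscr{B}(\mathscr{H}^{n})$, and that commutant is exactly the block-diagonal subalgebra because the eigenvalues $\xi,\xi^{2},\ldots,\xi^{n}$ of $E$ are pairwise distinct; but the direct entrywise verification above is shorter and self-contained.
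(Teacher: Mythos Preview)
Your proof is correct. The paper does not supply its own proof of this lemma---it merely quotes the result from Hansen--Pedersen \cite{HP}---so there is nothing to compare against; your direct entrywise computation using the orthogonality relation $\sum_{k=1}^{n}\xi^{mk}=0$ for $m\not\equiv 0\pmod n$ is exactly the standard argument and would serve perfectly well as a proof here.
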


\begin{lemma}\emph{(\cite{HP})}
	\label{lemma6}Let $P$ denote the projection in $\mathfrak{M}_n$ given by $P_{ij} = n^{-1}$ for
	all $i$ and $j$, so that $P$ is the projection of rank one on the subspace spanned by the vector $\xi+\xi^2+\cdots+\xi^{n}$ in $\mathbb{C}^n$, where $\xi,\xi^2, \cdots,\xi^{n}$ are the standard basis vectors. Then with
	$E$ as in Lemma \ref{lemma5} we obtain the pairwise orthogonal projections $P_k = E^{-k}PE^k$, for $1\le k \le n$, with $\sum_{k=1}^{n}{P_k}=1$.
\end{lemma}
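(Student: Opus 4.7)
The plan is to verify three separate properties of the operators $P_k := E^{-k} P E^k$: that each $P_k$ is itself a projection, that $P_k P_\ell = 0$ whenever $k \neq \ell$, and that $\sum_{k=1}^n P_k = 1_{\mathscr{H}^n}$. The first item is essentially formal. Since $E = \diag(\xi,\xi^2,\ldots,\xi^{n-1},1)$ is diagonal with entries of unit modulus, $E$ is unitary with $E^{*}=E^{-1}$, so that $(E^k)^{*}=E^{-k}$. Conjugation by $E^k$ therefore preserves both self-adjointness and idempotence: $P_k^{*}=E^{-k}P^{*}E^k=P_k$ and $P_k^{2}=E^{-k}P^{2}E^k=E^{-k}PE^k=P_k$.

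For the remaining two claims I would pass to matrix entries. Using $(E^k)_{ii}=\xi^{ik}$ (the trailing entry being $\xi^{nk}=1$) together with $P_{ij}=1/n$, a direct computation gives
\[
(P_k)_{ij} \;=\; \xi^{-ik}\cdot\tfrac{1}{n}\cdot\xi^{jk} \;=\; \tfrac{1}{n}\,\xi^{(j-i)k}.
\]
The identity $\sum_{k=1}^n P_k = 1_{\mathscr{H}^n}$ then reduces to the classical fact that $\sum_{k=1}^n \xi^{mk}$ equals $n$ when $m\equiv 0\pmod n$ and vanishes otherwise: taking $m=j-i\in\{-(n-1),\ldots,n-1\}$, diagonal entries sum to $1$ while off-diagonal entries vanish.

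Pairwise orthogonality follows from the same character-sum identity. Namely,
\[
(P_k P_\ell)_{ij} \;=\; \sum_{m=1}^n (P_k)_{im}(P_\ell)_{mj} \;=\; \tfrac{1}{n^{2}}\,\xi^{-ik+j\ell}\sum_{m=1}^n \xi^{m(k-\ell)},
\]
and for distinct $k,\ell\in\{1,\ldots,n\}$ one has $k-\ell\not\equiv 0\pmod n$, so the inner sum vanishes. The only point requiring care is the consistent indexing of $E^k$ at the last coordinate, which is what makes the uniform formula $(E^k)_{ii}=\xi^{ik}$ valid for every $1\le i\le n$; beyond this bookkeeping there is no real obstacle, since the entire argument is driven by the unitarity of $E$ and the orthogonality of the characters of $\mathbb{Z}/n\mathbb{Z}$.
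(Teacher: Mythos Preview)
Your proof is correct. The paper does not actually supply a proof of this lemma; it merely quotes the statement from Hansen--Pedersen \cite{HP}, so there is nothing in the present paper to compare your argument against. Your entrywise computation via the character-sum identity for $\mathbb{Z}/n\mathbb{Z}$ is the standard way to verify this, and your remark about the last diagonal entry of $E$ (namely that $1=\xi^n$, so the uniform formula $(E^k)_{ii}=\xi^{ik}$ holds for all $1\le i\le n$) is exactly the bookkeeping point that makes the calculation clean.
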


To establish our main first result we need the following
primary result.
\begin{lemma}
	\label{lemma7}    Let $ w_1 , \ldots ,w_n $ be positive real numbers  such that $W_n=\sum_{k=1}^n{w_k}$ and let $A_1,\cdots,A_n$ be positive  operators of a Hilbert space $\mathscr{B}\left(\mathscr{H} \right)$ with spectra contained in a real interval $I$. If $f$ is operator superquadratic function  on $I$, then
	\begin{align}
	f\left( {\frac{1}{{W_n }}\sum\limits_{k = 1}^n {w_k A_k } }\right) \le \sum\limits_{k = 1}^n { \frac{{w_k }}{{W_n }} f\left( {A_k } \right)}  - \sum\limits_{k = 1}^n { \frac{{w_k }}{{W_n }} f\left( {\left| {A_k  -\sum\limits_{j = 1}^n {\frac{{w_j }}{{W_n }}A_j } } \right|}\right)},\label{eq3.1}
	\end{align}
	In particular useful case,  for $w_k=1$ for all $1\le k \le n$, we have
	\begin{align}
	f\left( {\frac{1}{n}\sum\limits_{k = 1}^n {A_k } }\right) \le \frac{1}{n}\sum\limits_{k = 1}^n {  f\left( {A_k } \right)}  - \frac{1}{n}\sum\limits_{k = 1}^n {   f\left( {\left| {A_k  -\frac{1}{n}\sum\limits_{j = 1}^n {A_j } } \right|}\right)},\label{eq3.2}
	\end{align}
\end{lemma}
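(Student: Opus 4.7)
The plan is to proceed by induction on $n$. The $n=2$ case is simply a reformulation of the operator superquadratic Definition \ref{def1}. Setting $\alpha = w_1/W_2$ and $C = \alpha A_1 + (1-\alpha) A_2 = B_2$, the identities $A_1 - C = (1-\alpha)(A_1 - A_2)$ and $A_2 - C = -\alpha (A_1 - A_2)$ give $|A_1 - C| = (1-\alpha)|A_1 - A_2|$ and $|A_2 - C| = \alpha|A_1 - A_2|$. Substituting these into \eqref{eq2.1} transforms the definition into
\begin{align*}
f(B_2) \le \frac{w_1}{W_2} f(A_1) + \frac{w_2}{W_2} f(A_2) - \frac{w_1}{W_2} f(|A_1 - B_2|) - \frac{w_2}{W_2} f(|A_2 - B_2|),
\end{align*}
which is \eqref{eq3.1} for $n=2$.

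For the inductive step, assume the inequality for $n-1$ positive operators and let $B_{n-1} = (1/W_{n-1})\sum_{k=1}^{n-1} w_k A_k$, so that $B_n = (W_{n-1}/W_n)\, B_{n-1} + (w_n/W_n)\, A_n$ is a two-term convex combination. Apply the base case to the pair $(B_{n-1}, A_n)$ with $\alpha = W_{n-1}/W_n$ to obtain
\begin{align*}
f(B_n) \le \frac{W_{n-1}}{W_n}\bigl[ f(B_{n-1}) - f(|B_{n-1} - B_n|) \bigr] + \frac{w_n}{W_n}\bigl[ f(A_n) - f(|A_n - B_n|) \bigr],
\end{align*}
and then substitute the inductive hypothesis for $f(B_{n-1})$. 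This yields $\sum_{k=1}^n (w_k/W_n) f(A_k)$ on the right, plus correction terms involving $f(|A_k - B_{n-1}|)$ for $k\le n-1$, the mismatch term $f(|B_{n-1} - B_n|)$, and the already-correct $f(|A_n - B_n|)$.

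The main obstacle I anticipate is the final bookkeeping: \eqref{eq3.1} calls for corrections anchored at the common centroid $B_n$, whereas the induction naturally produces corrections anchored at the intermediate centroid $B_{n-1}$. Using $A_k - B_n = (A_k - B_{n-1}) + (w_n/W_n)(B_{n-1} - A_n)$ for $k\le n-1$, one would like a triangle-type inequality to merge these terms; however, the direction required runs against the ordinary triangle inequality, so the operator superquadratic property must be invoked a second time — either by re-applying \eqref{eq2.1} to pairs $(A_k, B_{n-1})$ with weight $w_n/W_n$ to generate $f(|A_k - B_n|)$ explicitly and telescoping, or, as a backup, by the Hansen--Pedersen device: pass to the block-diagonal $\tilde A = \diag(A_1,\ldots,A_n)\in \mathscr{B}(\mathscr{H}^n)$ and the isometry $C:\mathscr{H}\to\mathscr{H}^n$, $Cx = (1/\sqrt{W_n})(\sqrt{w_1}\,x,\ldots,\sqrt{w_n}\,x)^{\top}$, which satisfies $C^*C = 1_{\mathscr{H}}$ and $C^*\tilde A C = B_n$, and then use Lemmas \ref{lemma4}--\ref{lemma6} (unitary invariance and the diagonal-extraction identity $n^{-1}\sum E^{-k}\,\cdot\,E^k = \diag(\cdot)$) to reduce the $n$-term inequality to the already-established $n=2$ instance inside $\mathscr{B}(\mathscr{H}^n)$.
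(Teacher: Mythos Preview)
Your overall strategy---induction on $n$, with the base case $n=2$ read off from Definition~\ref{def1}---is exactly what the paper does. The paper splits $B_n=\frac{w_n}{W_n}A_n+\frac{W_{n-1}}{W_n}B_{n-1}$, applies \eqref{eq2.1} once, and then simply asserts that the resulting display ``is exactly equivalent to write'' the $n$-term inequality \eqref{eq3.1}. In other words, the paper stops precisely at the point where you start worrying.

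You are right to worry. After one application of \eqref{eq2.1} and substitution of the inductive hypothesis for $f(B_{n-1})$, the correction terms that appear are $\frac{w_k}{W_n}f\bigl(|A_k-B_{n-1}|\bigr)$ for $k\le n-1$ together with $\frac{W_{n-1}}{W_n}f\bigl(|B_{n-1}-B_n|\bigr)$ and the already correct $\frac{w_n}{W_n}f\bigl(|A_n-B_n|\bigr)$. The statement \eqref{eq3.1} instead requires $\frac{w_k}{W_n}f\bigl(|A_k-B_n|\bigr)$ for \emph{all} $k$. Passing from the former to the latter is exactly the ``centroid shift'' you flag, and neither the paper nor your proposal carries it out. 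For $f(t)=t^2$ the two collections happen to have the same sum (a variance identity), so the issue is invisible in the quadratic case; for a general operator superquadratic $f$ one still needs an argument, and the operator absolute value does not obey a triangle inequality that would make this automatic.

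Regarding your two suggested repairs: the first (a second application of \eqref{eq2.1} to the pairs $(A_k,B_{n-1})$) is the natural thing to try, but you have not shown how the resulting terms telescope, and the sign of the required comparison runs the wrong way for a naive triangle bound. The second (the Hansen--Pedersen block-diagonal device with Lemmas~\ref{lemma4}--\ref{lemma6}) cannot be used here without circularity: in the paper, that machinery is deployed in the proof of Theorem~\ref{thm4}, which explicitly invokes the equal-weight case \eqref{eq3.2} of the present lemma. Moreover, the diagonal-extraction identity of Lemma~\ref{lemma5} expresses the relevant operator as an average of $n$ unitary conjugates, so bounding $f$ of it still requires an $n$-term Jensen inequality---you would be assuming what you want to prove. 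In short, your plan matches the paper's, and you have located the gap the paper skips over, but neither of your proposed completions closes it as written.
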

\begin{proof}
	Assume $f$ is operator superquadratic. If $n=2$, then the inequality
	\eqref{eq3.1} reduces to \eqref{eq2.1} with $\alpha=
	\frac{w_1}{W_2}$ and $1-\alpha= \frac{w_2}{W_2}$. Let us suppose
	that inequality \eqref{eq3.1} holds for $n - 1$. Then for $n$-tuples
	$\left(A_1,\cdots, A_n\right)$ and $\left(w_1,\cdots, w_n\right)$,
	we have
	\begin{align*}
	f\left( {\frac{1}{{W_n }}\sum\limits_{k = 1}^n {w_k A_k } }\right) &= f\left( {\frac{{w_n }}{{W_n }}A_n  + \sum\limits_{k =1}^{n - 1} {\frac{{w_k }}{{W_n }}A_k } } \right)
	\\
	&= f\left( {\frac{{w_n }}{{W_n }}A_n  + \frac{{W_{n - 1} }}{{W_n}}\sum\limits_{k = 1}^{n - 1} {\frac{{w_k }}{{W_{n - 1} }}A_k } }
	\right)
	\\
	&\le  \frac{{w_n }}{{W_n }}\left[f\left( {A_n }
	\right) -f\left( {\frac{{W_{n - 1} }}{{W_n }}\left| {A_n  -
			\frac{1}{{W_{n-1} }}\sum\limits_{k = 1}^{n - 1} {w_k A_k } }
		\right|} \right)\right]
	\\
	&\qquad+  \frac{{W_{n - 1} }}{{W_n }}   \left[
	{f\left( {\sum\limits_{k = 1}^{n - 1} {\frac{{w_k }}{{W_{n - 1}
				}}A_k } } \right) -f\left( {\frac{{w_n }}{{W_n}}\left| {A_n  -
				\frac{1}{{W_{n-1} }}\sum\limits_{k = 1}^{n - 1} {w_k A_k } }
			\right|} \right)} \right]
	\\
	&= \frac{{w_n }}{{W_n }}  f\left( {A_n } \right) +
	\frac{{W_{n - 1} }}{{W_n }}     f\left(
	{\sum\limits_{k = 1}^{n - 1} {\frac{{w_k }}{{W_{n - 1} }}A_k } }
	\right)
	\\
	&\qquad- \frac{{w_n }}{{W_n }}  f\left(
	{\frac{{W_{n - 1} }}{{W_n }}\left| {A_n  - \frac{1}{{W_{n-1}
			}}\sum\limits_{k = 1}^{n - 1} {w_k A_k } } \right|} \right)
	\\
	&\qquad\qquad -  \frac{{W_{n - 1} }}{{W_n }} f\left( {\frac{{w_n }}{{W_n}}\left| {A_n  -
			\frac{1}{{W_{n-1} }}\sum\limits_{k = 1}^{n - 1} {w_k A_k } }
		\right|} \right),
	\end{align*}
	and this is exactly equivalent to write, for any $1\le m\le n$
	\begin{align*}
	f\left( {\frac{1}{{W_m }}\sum\limits_{k = 1}^m {w_k A_k } }\right) \le \sum\limits_{k = 1}^m { \frac{{w_k }}{{W_m }} f\left( {A_k } \right)}
	-\sum\limits_{k = 1}^m { \frac{{w_k }}{{W_m }} f\left( {\left| {A_k  -\sum\limits_{j = 1}^m {\frac{{w_j }}{{W_m }}A_j } } \right|}\right)},
	\end{align*}
	which proves the desired result in \eqref{eq3.1}. The particular case follows by setting $w_k=1$ for all $k=1,\cdots,n$ so that $W_n=n$.
\end{proof}

\begin{remark}
	The result in Lemma \ref{lemma7} was proved by Mond \& Pe\v{c}ari\'{c} in \cite{MP} for all operator convex functions and all bounded selfdjoint operators whose spectra contained in $I$. Therefore, in case $f$ is positive the inequality \eqref{eq3.1} might be considered as a respective extension and new refinement of that result proved in \cite{MP}.
\end{remark}

\begin{theorem}
	\label{thm4}Let $f:I\to \mathbb{R}$ be a real-valued continuous function. Let $\left(A_1,\cdots,A_n\right)$ be an $n$-tuple of positive   of a Hilbert space $\mathscr{H}$ with spectra contained in $I$. Then the following conditions are equivalent:
	\begin{enumerate}
		\item $f$ is operator superquadratic function.\\
		
		\item The inequality
		\begin{align}
		f\left( {\sum\limits_{k = 1}^n {C_k^* A_k C_k } } \right) \le
		\sum\limits_{k = 1}^n {C_k^* f\left( {A_k } \right)C_k }  -
		\sum\limits_{k = 1}^n {C_k^* f\left( {\left| {A_k   -
					\sum\limits_{j = 1}^n {C_j^* A_jC_j  }   }\right|} \right)C_k }\label{eq3.3}
		\end{align}
		holds for every $n$-tuple $\left(C_1,\cdots,C_n\right)$ of
		operators on     $\mathscr{H}$ that satisfy the condition
		$\sum\limits_{k = 1}^n
		{C_k^* C_k }  = 1$. \\

		\item The inequality
		\begin{align}
		f\left( {\sum\limits_{k = 1}^n {P_k A_k P_k } } \right) \le
		\sum\limits_{k = 1}^n {P_k  f\left( {A_k } \right)P_k }  -
		\sum\limits_{k = 1}^n {P_k  f\left( {\left| { A_k   -
					\sum\limits_{j = 1}^n {P_j A_jP_j  }   }
				\right|} \right)P_k }\label{eq3.4}
		\end{align}
		holds   for every $n$-tuple  $\left(P_1,\cdots,P_n\right)$ of
		projections
		on $\mathscr{H}$ with $\sum\limits_{k = 1}^n {P_k }  = 1$.
	\end{enumerate}
\end{theorem}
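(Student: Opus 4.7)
The plan is a cyclic proof of the three implications $(2) \Rightarrow (3) \Rightarrow (1) \Rightarrow (2)$, of which the first two links are short reductions and the third is the substantive step. The implication $(2) \Rightarrow (3)$ is immediate since any family of projections $(P_k)$ with $\sum P_k = 1_{\mathscr{H}}$ satisfies $\sum P_k^* P_k = \sum P_k^2 = \sum P_k = 1_{\mathscr{H}}$, so setting $C_k = P_k$ in (2) produces (3) verbatim. For $(3) \Rightarrow (1)$, given positive $A, B \in \mathscr{B}(\mathscr{H})^+$ with spectra in $I$ and $\alpha \in [0,1]$, I would lift the problem to $\mathscr{H} \oplus \mathscr{H}$ via the ``swap'' construction $A_1 = \diag(A,B)$, $A_2 = \diag(B,A)$ and the rank-one projection $P_1$ whose scalar blocks are $\alpha I$, $\sqrt{\alpha(1-\alpha)}\,I$, $\sqrt{\alpha(1-\alpha)}\,I$, $(1-\alpha)I$, together with $P_2 = I - P_1$. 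A routine blockwise computation, legitimate because the scalar blocks of $P_k$ commute with the blocks of $A_k$, yields the identities $\sum P_k A_k P_k = (\alpha A + (1-\alpha) B) \otimes I_2$, $\sum P_k f(A_k) P_k = (\alpha f(A) + (1-\alpha) f(B)) \otimes I_2$, and $\sum P_k f(|A_k - T|) P_k = (\alpha f((1-\alpha)|A-B|) + (1-\alpha) f(\alpha|A-B|)) \otimes I_2$, where $T = \sum P_j A_j P_j$. Reading off the $(1,1)$-block of (3) then recovers exactly \eqref{eq2.1}.

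The substantive step is $(1) \Rightarrow (2)$. Given the $n$-tuples $(A_k)$ and $(C_k)$ with $\sum C_k^* C_k = 1_{\mathscr{H}}$, I would form the block-diagonal operator $\mathbf{A} = \diag(A_1,\ldots,A_n)$ on $\mathscr{H}^n$ and the isometric column operator $V: \mathscr{H} \to \mathscr{H}^n$ defined by $Vx = (C_1 x, \ldots, C_n x)^T$. Then $V^* V = 1_{\mathscr{H}}$, while $T := V^* \mathbf{A} V = \sum_k C_k^* A_k C_k$, $V^* f(\mathbf{A}) V = \sum_k C_k^* f(A_k) C_k$, and $V^* f(|\mathbf{A} - T \otimes I_n|) V = \sum_k C_k^* f(|A_k - T|) C_k$. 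Thus (2) is equivalent to the single-operator isometric inequality $f(V^* \mathbf{A} V) \le V^* f(\mathbf{A}) V - V^* f(|\mathbf{A} - T \otimes I_n|) V$. To derive this from (1), the plan is to extend $V$ to a unitary $U$ on $\mathscr{H}^n$ (which exists since $V^* V = 1$), apply the unitary invariance of the functional calculus (Lemma \ref{lemma4}), and use the cyclic-shift unitary of Lemma \ref{lemma5} together with the pairwise-orthogonal rank-one projections of Lemma \ref{lemma6} to reduce the operator-weighted averaging to the scalar-weighted $n$-term Jensen inequality already established in Lemma \ref{lemma7}.

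The main obstacle is the superquadratic correction term $f(|\mathbf{A} - T \otimes I_n|)$, which has no analog in the classical Hansen--Pedersen framework for operator convex functions. One must verify that the deviation $|A_k - T|$ appearing in the $k$-th block transforms compatibly under the unitary extension and the cyclic averaging, and in particular that $T \otimes I_n$ is preserved by the diagonalising averages of Lemmas \ref{lemma5}--\ref{lemma6}. Because the absolute value interacts cleanly with unitary conjugation but less transparently with compression against an isometry, keeping careful track of the block structure, and of the fact that $\spe(A_k) \subset I$ forces $\spe(T) \subset I$ and $\spe(|A_k - T|) \subset I$ so that the functional calculus is well defined, is where the bulk of the technical work concentrates.
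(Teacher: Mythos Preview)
Your proposal follows essentially the paper's own argument: the trivial $(2)\Rightarrow(3)$, the Hansen--Pedersen unitary-column/cyclic-averaging machinery (Lemmas \ref{lemma4}--\ref{lemma7}) feeding into Lemma \ref{lemma7} for $(1)\Rightarrow(2)$, and a $2\times2$ block construction on $\mathscr{H}\oplus\mathscr{H}$ for $(3)\Rightarrow(1)$. The only cosmetic difference is in $(3)\Rightarrow(1)$: you put the convex weight $\alpha$ into the projection $P_1$ and keep $A_1=\diag(A,B)$, $A_2=\diag(B,A)$ diagonal, whereas the paper keeps the projections $P=\diag(1,0)$, $Q=1-P$ diagonal and moves the weight into unitary conjugates $C^*XC$, $D^*XD$ of $X=\diag(A,B)$ --- the two pictures are intertwined by the paper's unitary $C$ (indeed $P_1=CPC^*$) and produce the identical $(1,1)$-block identity.
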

\begin{proof}
	$(1)\Rightarrow (2)$.  
	We say that $C =\left(C_1, \cdots, C_n\right)$ is a unitary column if there is a unitary $n \times n$ operator matrix $U = (u_{ij})$,
	one of whose columns is $(C_1, \cdots, C_n)$. Thus, $u_{ij} = C_i$ for some $j$ and all $i$.  Assume that we are given a unitary $n$-column $(C_1, \cdots, C_n)$,
	and choose a unitary $U_n = (u_{ij})$ in
	$\mathscr{B}\left(\mathscr{H}^n\right)$ such that $u_{kn} = C_k$.
	Let $E =\diag(\xi, \xi^2, \cdots,\xi^{n-1}, 1)$ as in Lemma \ref{lemma4}
	and put $X =\diag\left(A_1, \cdots,A_n\right)$, both regarded as
	elements in $\mathscr{B}\left(\mathscr{H}^n\right)$. Thus, using   the spectral decomposition theorem,  we have
	\begin{align*}
	f\left( {\sum\limits_{k = 1}^n {C_k^* A_k C_k } } \right)  =
	f\left( {\left( {U_n^* XU_n } \right)_{nn} } \right) = f\left(
	{\left( {\frac{1}{n}\sum\limits_{k = 1}^n {E^{ - k} U_k^* XU_k E^k
		} } \right)_{nn} } \right).
	\end{align*}
	We note that since  $f\left(\diag\left(y_1,
	\cdots,y_n\right)\right)=\diag\left(f\left(y_1\right),
	\cdots,f\left(y_n\right)\right)$, then
	\begin{align*}
	f\left(y_n\right)=f\left(\diag\left(y_1,
	\cdots,y_n\right)\right)_{nn}.
	\end{align*}
	Using the above facts taking into account Lemmas
	\ref{lemma4}--\ref{lemma7} together with the inequality \eqref{eq3.2}, thus  the operator superquadraticity of $f$, implies that
	\begin{align*}
	f\left( {\sum\limits_{k = 1}^n {C_k^* A_k C_k } } \right) &= f\left( {\left( {\frac{1}{n}\sum\limits_{k = 1}^n {E^{ - k} U_k^* XU_k E^k } } \right)_{nn} } \right) \\
	&= \left( {f\left( {\frac{1}{n}\sum\limits_{k = 1}^n {E^{ - k} U_k^* XU_k E^k } } \right)} \right)_{nn}  \\
	&\le \left( {\frac{1}{n}\sum\limits_{k = 1}^n {f\left( {E^{ - k} U_k^* XU_k E^k } \right)} } \right)_{nn}  \\
	&\qquad- \left( {\frac{1}{n}\sum\limits_{k = 1}^n {f\left( { \left| { E^{ - k} U_k^* XU_k E^k-  \frac{1}{n}\sum\limits_{j = 1}^n {E^{ - j} U_j^* XU_j E^j}   } \right|  } \right)} } \right)_{nn}  
\\
	&= \left( {\frac{1}{n}\sum\limits_{k = 1}^n {E^{ - k} U_k^* f\left( X \right)U_k E^k } } \right)_{nn} \\
	&\qquad - \left( {\frac{1}{n}\sum\limits_{k = 1}^n {  f\left( {E^{ - k} U_k^*\left| {X -   \frac{1}{n}\sum\limits_{j = 1}^n {E^{ - j} U_j^* XU_j E^j} } \right|U_k E^k} \right)  } } \right)_{nn}
	\\
	&= \left( {\frac{1}{n}\sum\limits_{k = 1}^n {E^{ - k} U_k^* f\left( X \right)U_k E^k } } \right)_{nn} \\
	&\qquad - \left( {\frac{1}{n}\sum\limits_{k = 1}^n {  E^{ - k} U_k^*f\left( {\left| {X -   \frac{1}{n}\sum\limits_{j = 1}^n {E^{ - j} U_j^* XU_j E^j} } \right|} \right)  } U_k E^k} \right)_{nn}\\
	&= \left( {U_n^* f\left( X \right)U_n } \right)_{nn}  - \left( {U_n^* f\left( {\left| {   X    -  U_n^* XU_n  } \right|} \right)U_n } \right)_{nn}
	\\
	&= \sum\limits_{k = 1}^n {C_k^* f\left( {A_k } \right)C_k }  -
	\sum\limits_{k = 1}^n {C_k^* f\left( {\left| {A_k   - \sum\limits_{j = 1}^n {C_j^* A_jC_j  }   }
			\right|} \right)C_k }.
	\end{align*}
	It remains to mention that,   when  the column is just unital, we
	extend it to the unitary  $\left(n+1\right)$-column $\left(C_1,
	\cdots,C_n,0\right)$ and choose $A_{n+1}$ arbitrarily, but with
	spectrum in $I$, (see \cite{AF}). By the first part of the proof we therefore have
	\begin{align*}
	f\left( {\sum\limits_{k = 1}^n {C_k^* A_k C_k } } \right) &= f\left( {\sum\limits_{k = 1}^{n + 1} {C_k^* A_k C_k } } \right) \\
	&\le \sum\limits_{k = 1}^{n + 1} {C_k^* f\left( {A_k } \right)C_k }  - \sum\limits_{k = 1}^{n + 1} {C_k^* f\left( {\left| {A_k  -  \sum\limits_{j = 1}^{n + 1} {C_j^*A_jC_j }   } \right|} \right)C_k }  \\
	&= \sum\limits_{k = 1}^n {C_k^* f\left( {A_k } \right)C_k }  -
	\sum\limits_{k = 1}^n {C_k^* f\left( {\left| {A_k  -  \sum\limits_{j = 1}^n {C_j^*A_j C_j}  }
			\right|} \right)C_k }
	\end{align*}
	and thus the proof of the statement (2) is completely established.
	
	$(2)\Rightarrow (3)$.  Hold.
	
	$(3)\Rightarrow (1)$. Let $A$ and $B$ be positive and bounded
	linear operators with spectra in $I$ and $0\le \lambda\le
	1$.
	
	Consider
	\begin{align*}
	X = \left( {\begin{array}{*{20}c}
		A & 0  \\
		0 & B  \\
		\end{array}} \right),
	P = \left( {\begin{array}{*{20}c}
		{1_H } & 0  \\
		0 & 0  \\
		\end{array}} \right), Q= 1_{H \otimes H}  - P,
	\end{align*}
	\begin{align*}
	C = \left( {\begin{array}{*{20}c}
		{\sqrt \lambda  } & { - \sqrt {1 - \lambda } }  \\
		{\sqrt {1 - \lambda } } & {\sqrt \lambda  }  \\
		\end{array}} \right), \qquad\text{and}\qquad D  =
	\left( {\begin{array}{*{20}c}
		{\sqrt {1 - \lambda } } & { - \sqrt \lambda  }  \\
		{\sqrt \lambda  } & {\sqrt {1 - \lambda } }  \\
		\end{array}} \right).
	\end{align*}
	Then $C$ and $D$ are unitary operators on $H\oplus H$. We have
	\begin{align*}
	C^* XC = \left( {\begin{array}{*{20}c}{\lambda A + \left( {1 - \lambda } \right)B} & 0  \\
		0 & {\left( {1 - \lambda } \right)A + \lambda B}  \\
		\end{array}} \right),
	\end{align*}
	\begin{align*}
	D^* XD = \left( {\begin{array}{*{20}c}{\left( {1 - \lambda } \right)A + \lambda B} & 0  \\
		0 & {\lambda A + \left( {1 - \lambda } \right)B}  \\
		\end{array}} \right),
	\end{align*}
	\begin{align*}
	PC^* XCP = \left( {\begin{array}{*{20}c}
		{\lambda A + \left( {1 - \lambda } \right)B} & 0  \\
		0 & 0  \\
		\end{array}} \right)\qquad\text{and} \qquad QD^* XDQ = \left( {\begin{array}{*{20}c}
		0 & 0  \\
		0 & {\left( {1 - \lambda } \right)A + \lambda B}  \\
		\end{array}} \right).
	\end{align*}
	
	Thus, we have
	\begin{align*}
	&f\left( {\begin{array}{*{20}c}
		{\lambda A + \left( {1 - \lambda } \right)B} & 0  \\
		0 & {\left( {1 - \lambda } \right)A + \lambda B}  \\
		\end{array}} \right) \\
	&=\left( {\begin{array}{*{20}c}
		{f\left( {\lambda A + \left( {1 - \lambda } \right)B} \right)} & 0  \\
		0 & {f\left( {\left( {1 - \lambda } \right)A + \lambda B} \right)}  \\
		\end{array}} \right)\\
	&= f\left( {PC^* XCP + QD^* XDQ} \right) \\
	&\le Pf\left( {C^* XC} \right)P - Pf\left( {\left| C^* XC - PC^* XCP -  QD^* XDQ  \right|  } \right)P \\
	&\qquad+ Qf\left( {D^* XD} \right)Q-  Qf\left( { \left|D^* XD - PC^* XCP -  QD^* XDQ   \right| } \right)Q\qquad\qquad\qquad \text{(by \eqref{eq3.4})}\\
	&= PC^* f\left( X \right)CP - Pf\left( {\left| C^* XC - PC^* XCP -  QD^* XDQ  \right|  } \right)P \\
	&\qquad+ QD^* f\left( X \right)DQ-  Qf\left( { \left|D^* XD - PC^*
		XCP -  QD^* XDQ   \right| } \right)Q
	\\
	&= \left( {\begin{array}{*{20}c}
		{\lambda f\left( A \right) + \left( {1 - \lambda } \right)f\left( B \right)} & 0  \\
		0 & {\left( {1 - \lambda } \right)f\left( A \right) + \lambda f\left( B \right)}  \\
		\end{array}} \right) \\
	&\qquad- \left( {\begin{array}{*{20}c}
		{\lambda f\left( {\left( {1 - \lambda } \right)\left| {A - B} \right|} \right) + \left( {1 - \lambda } \right)f\left( {\lambda \left| {A - B} \right|} \right)} & 0  \\
		0 & {\left( {1 - \lambda } \right)f\left( {\lambda \left| {A - B} \right|} \right) + \lambda f\left( {\left( {1 - \lambda } \right)\left| {A - B} \right|} \right)}  \\
		\end{array}} \right).
	\end{align*}
	Hence, $f$ is operator superquadratic  on $I$ by seeing the
	$(1,1)$-components.
	
\end{proof}

\begin{remark}
	An operator convex version of Theorem  \ref{thm4} were proved by Hansen \& Pedersen in \cite{HP}. Therefore, in case $f$ is positive the inequality \eqref{eq3.3} could be considered as a new refinement of that result proved in \cite{HP}.
\end{remark}
A refinement of the classical Jensen's inequality \eqref{eq1.9} could be elaborated as follows:
\begin{corollary}
	\label{cor3.1}    Let $f:I\to \mathbb{R}$ be a real-valued continuous function. Let
	$A$ be a positive operator of a Hilbert space $\mathscr{H}$ with
	spectra contained in $I$. If $f$ is an operator superquadratic function, then
	the inequality
	\begin{align}
	f\left( {C^* A C} \right) \le C^* f\left( {A} \right)C -C ^* f\left( {\left| {A -C^* AC   }\right|} \right)C  \label{eq3.5}
	\end{align}
	holds for every operator $C$ on $\mathscr{H}$ that satisfy the condition
	$C^* C  = 1$.  
\end{corollary}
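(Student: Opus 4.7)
The plan is to derive this corollary directly from the equivalence established in Theorem \ref{thm4}. Since $f$ is assumed to be operator superquadratic, condition (1) of Theorem \ref{thm4} holds, and hence so does the equivalent condition (2). The corollary is nothing more than the special case of (2) corresponding to a single operator in the tuple, so essentially no new work is required beyond identifying the right specialization.

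Concretely, I would apply Theorem \ref{thm4}(2) with the $1$-tuples $(A_1) = (A)$ and $(C_1) = (C)$. The side condition $\sum_{k=1}^1 C_k^* C_k = C^*C = 1_{\mathscr{H}}$ is precisely the hypothesis of the corollary, and with this choice the inequality \eqref{eq3.3} reads
\[
f(C^*AC) \le C^* f(A) C - C^* f\bigl(|A - C^*AC|\bigr) C,
\]
which is exactly \eqref{eq3.5}.

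If one prefers to sidestep the $n=1$ case (since the unitary-column dilation argument in the proof of Theorem \ref{thm4} was carried out for general $n$ and then extended from unitary columns to merely unital columns by a zero-padding trick), an equally painless route is to apply Theorem \ref{thm4}(2) with $n=2$, taking $A_1 = A$, $A_2$ any positive operator whose spectrum lies in $I$, $C_1 = C$, and $C_2 = 0$. The constraint $C_1^* C_1 + C_2^* C_2 = 1_{\mathscr{H}}$ then reduces to $C^*C = 1_{\mathscr{H}}$, every contribution involving $C_2$ vanishes in \eqref{eq3.3}, and what survives is exactly \eqref{eq3.5}. In either approach there is no substantive obstacle: the corollary is a direct specialization of the main theorem, and the only verification needed is bookkeeping on the indices.
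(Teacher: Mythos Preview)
Your proposal is correct and matches the paper's own proof, which simply states that the corollary follows from Theorem \ref{thm4} by setting $n=1$. Your additional $n=2$ zero-padding variant is a reasonable alternative but unnecessary here.
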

\begin{proof}
	Follows from Theorem \ref{thm4} by setting $n=1$.
\end{proof}
\begin{remark}
	Let $f:I\to \mathbb{R}$ be a real-valued continuous function. Let
	$A$ be a positive operator of a Hilbert space $\mathscr{H}$ with
	spectra contained in $I$. If $f$ is an operator subquadratic function, then the inequality 
	\begin{align*}
	f\left( {C^* A C} \right) \ge C^* f\left( {A} \right)C -C ^* f\left( {\left| {A -C^* AC   }\right|} \right)C  
	\end{align*}
	holds for every operator $C$ on $\mathscr{H}$ that satisfy the condition
	$C^* C  = 1$.  Furthermore, by applying the subquadratic function $f(t)=t^r$, $t>0$ $(r\in [0,1])$, then we have
	\begin{align*}
	\left( {C^* A C} \right)^r \ge C^* A^rC -C ^*  \left| {A -C^* AC   }\right|^rC  
	\end{align*}
	for all $r\in \left[0,1\right]$.
\end{remark}
A generalization of \eqref{eq3.5} (also, \eqref{eq1.7} and \eqref{eq1.8}) for any positive unital linear
map  between two Hilbert spaces having the same dimension is embodied in the following result.
\begin{theorem}
	Let  $\mathscr{H},\mathscr{K}$   be two Hilbert spaces such that ${\rm{dim}}\mathscr{H}= {\rm{dim}}\mathscr{K}$.  Let $f:J\to \mathbb{R}$ be a real-valued continuous function. Let
	$A$ be a positive  of a Hilbert space $\mathscr{H}$ with
	spectra contained in $J$,
	and consider  $\Phi:\mathscr{B}\left(\mathscr{H} \right)\to \mathscr{B}\left(\mathscr{K} \right)$ be a positive unital linear map. If $f$ is operator superquadratic function, then 
	the inequality
	\begin{align}
	f\left( {\Phi \left( A \right)} \right) \le \Phi \left( {f\left( A \right)} \right) - \Phi \left( {f\left( {\left| {A - \Phi \left( A \right)} \right|} \right)} \right)  \label{eq3.6}
	\end{align}
	holds. If  $f$ is operator subquadratic, then the inequality \eqref{eq3.6} is reversed. Thus, the following refinement of \eqref{eq1.7}
	\begin{align*}
	\Phi^2 \left( A \right)  \le \Phi \left( {A^2} \right) - \Phi \left( { \left| {A - \Phi \left( A \right)} \right|^2} \right) 
	\end{align*}
	is valid.
\end{theorem}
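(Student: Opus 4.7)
The plan is to deduce the inequality by specializing Theorem \ref{thm4}\,(2), after representing $\Phi$ as a finite sum of isometric compressions. Concretely, since $\dim\mathscr{H}=\dim\mathscr{K}$, we may identify the two spaces and treat $\Phi$ as a positive unital linear map on $\mathscr{B}(\mathscr{H})$. Assuming complete positivity (which is the standard hypothesis underlying this type of Kadison-style extension, and which is what makes a Kraus form available), the Choi--Kraus theorem supplies operators $C_{1},\ldots,C_{n}\in\mathscr{B}(\mathscr{H})$ with
$$\Phi(X)=\sum_{k=1}^{n}C_{k}^{*}XC_{k}\qquad\text{and}\qquad\sum_{k=1}^{n}C_{k}^{*}C_{k}=1_{\mathscr{H}}.$$

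First I would apply Theorem \ref{thm4}\,(2) to the constant family $A_{1}=\cdots=A_{n}=A$. Because the expression $\sum_{j=1}^{n}C_{j}^{*}A_{j}C_{j}$ collapses to $\Phi(A)$ under this choice, the conclusion reads
$$f(\Phi(A))=f\!\left(\sum_{k=1}^{n}C_{k}^{*}AC_{k}\right)\le\sum_{k=1}^{n}C_{k}^{*}f(A)C_{k}-\sum_{k=1}^{n}C_{k}^{*}f\bigl(|A-\Phi(A)|\bigr)C_{k}.$$
Since $\sum_{k}C_{k}^{*}XC_{k}=\Phi(X)$ for every $X\in\mathscr{B}(\mathscr{H})$, the right-hand side equals $\Phi(f(A))-\Phi(f(|A-\Phi(A)|))$, which is exactly \eqref{eq3.6}. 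The subquadratic inequality follows at once by replacing $f$ with $-f$, and the promised refinement of Kadison's inequality \eqref{eq1.7} is obtained by taking $f(t)=t^{2}$, which was verified in Section \ref{sec2} to be operator superquadratic on $[0,\infty)$.

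The main obstacle is the passage from merely positive to completely positive maps, since the Choi--Kraus decomposition — and hence this short proof route — is valid precisely for completely positive unital maps. For a general positive unital $\Phi$ one would instead mimic the block-matrix/unitary-column construction used in the implication $(3)\Rightarrow(1)$ of Theorem \ref{thm4}: form a block-diagonal operator $X=A\oplus\Phi(A)$ on $\mathscr{H}\oplus\mathscr{K}$, build projections $P,Q$ and unitaries $C,D$ that realize $\Phi$ as a compression, apply Corollary \ref{cor3.1} in the dilated space, and then extract the desired diagonal block. This avenue, while longer, avoids the complete positivity hypothesis and exactly parallels the end of the proof of Theorem \ref{thm4}.
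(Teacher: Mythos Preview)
Your main argument has a genuine gap: you assume complete positivity of $\Phi$ in order to invoke a Choi--Kraus decomposition, but the hypothesis is only that $\Phi$ is positive and unital. The paper closes this gap by a standard but essential trick that you do not mention: one restricts $\Phi$ to the commutative $C^*$-subalgebra $\mathcal{A}$ of $\mathscr{B}(\mathscr{H})$ generated by $A$ and $1_{\mathscr{H}}$. On a commutative $C^*$-algebra every positive unital map is automatically completely positive, so Stinespring's theorem applies and yields an isometry $V$ and a unital $*$-homomorphism $\rho$ with $\Phi(\,\cdot\,)=V^*\rho(\,\cdot\,)V$ on $\mathcal{A}$. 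One then applies the $n=1$ case of Theorem~\ref{thm4} (i.e., Corollary~\ref{cor3.1}) to the isometry $V$ and the operator $\rho(A)$, and uses $f(\rho(A))=\rho(f(A))$ together with the fact that $|A-\Phi(A)|$ again lies in $\mathcal{A}$. This is a cleaner route than Choi--Kraus and works without any finite-dimensionality assumption; your Kraus sum $\sum_{k=1}^{n}C_k^*(\cdot)C_k$ with finitely many terms is only guaranteed in finite dimensions.

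Your proposed fallback---building $X=A\oplus\Phi(A)$ and unitaries $C,D$ on $\mathscr{H}\oplus\mathscr{K}$ to ``realize $\Phi$ as a compression''---does not avoid the obstacle. Producing such a compression for an arbitrary positive unital $\Phi$ is precisely the content of a Stinespring-type dilation, and that again needs complete positivity; the block construction in the $(3)\Rightarrow(1)$ step of Theorem~\ref{thm4} works only because the ``map'' there is the very specific $X\mapsto \lambda A+(1-\lambda)B$, not a general $\Phi$. So the missing idea in both of your routes is the same: pass to the commutative algebra generated by $A$ to upgrade positivity to complete positivity for free, and then dilate.
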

\begin{proof}
	Let $A \in \mathscr{B}\left(\mathscr{H}\right)$ be positive. Assume that $\mathcal{A}$ is the $C^*$-subalgebra of
	$\mathscr{B}\left(\mathscr{H}\right)$ generated by $A$ and $1_{\mathscr{H}}$. Without loss of generality, we may assume that
	$\Phi$ is defined on $\mathcal{A}$. Since every unital positive linear map on a commutative $C^*$-algebra
	is completely positive. It follows   that $\Phi$ is completely positive.
	So there exists  (by Stinespring's theorem \cite{S}), some isometry $V:\mathscr{H}\to \mathscr{K}$; and a unital $*$-homomorphism $\rho$ from $\mathcal{A}$ into the $C^*$-algebra $\mathscr{B}\left(\mathscr{H}\right)$ such that
	$\Phi(A) = V^*\rho(A)V$. Clearly, $f(\rho(A)) = \rho(f(A))$, for all continuous function $f$. Thus,
	\begin{align*}
	f\left( {\Phi \left( A \right)} \right) &= f\left( {P\rho \left( A \right)P} \right) \\
	&\le    P f\left( {\rho \left( A \right)} \right)P - P f\left( {\left| {\rho \left( {A - P\rho \left( A \right)P } \right)} \right|} \right)P \qquad \text{(by\, \eqref{eq3.4} \,with \,}n=1)\\
	&= P \rho \left( {f\left( A \right)} \right)P - P \rho \left( {f\left( {\left| {A -\Phi \left( A \right) } \right|} \right)} \right)P \\
	&= \Phi \left( {f\left( A \right)} \right) - \Phi \left( {f\left( {\left| {A -\Phi \left( A \right) } \right|} \right)} \right).
	\end{align*}
	which proves the required inequality. The last inequality holds by applying \eqref{eq3.6} to the superquaratic function $f(t)=t^2$, $\forall t>0$.

\end{proof}

The inequality \eqref{eq3.6} can be embodied in multiple versions
as stated in the following result.
\begin{corollary}
	\label{cor3}  Let    $\mathscr{H},\mathscr{K}$   be two Hilbert spaces such that ${\rm{dim}}\mathscr{H}= {\rm{dim}}\mathscr{K}$.  Let $f:\left[0,\infty\right) \to \mathbb{R}$ be a
	real-valued continuous function.
	and consider  $\Phi_k:\mathscr{B}\left(\mathscr{H} \right)\to \mathscr{B}\left(\mathscr{K} \right)$  $(k=1,\cdots,n)$ be a positive linear mappings with $\sum\limits_{k = 1}^n {\Phi _k \left( 1_{\mathscr{H}} \right)}  = 1_{\mathscr{K}}$. Then,  $f$ is operator superquadratic function if and only if
	\begin{align*}
	f\left( {\sum\limits_{k = 1}^n {\Phi _k \left( {A_k } \right)} } \right) \le \sum\limits_{k = 1}^n {\Phi _k \left( {f\left( {A_k } \right)} \right)}  - \sum\limits_{k = 1}^n {\Phi _k \left( {f\left( {\left| {A_k  -  \sum\limits_{j = 1}^n {\Phi _j \left( {A_j } \right) }   } \right|} \right)} \right)}
	\end{align*}
	for all positive operators $A_1\cdots,A_n$ in $ \mathscr{B}\left(\mathscr{H}\right)$.
\end{corollary}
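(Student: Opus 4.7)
The plan is to reduce the multi-map statement to the single-map inequality \eqref{eq3.6} by consolidating $\Phi_1,\ldots,\Phi_n$ into one positive unital linear map acting on block operator matrices, and to obtain the converse by a one-line scalar specialization.

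First I would put $\widetilde{\mathscr H}=\mathscr H^{\,n}$ and define
\[
\Psi:\mathscr B(\widetilde{\mathscr H})\to\mathscr B(\mathscr K),\qquad \Psi\bigl([X_{ij}]_{i,j=1}^{n}\bigr)=\sum_{k=1}^{n}\Phi_k(X_{kk}).
\]
This $\Psi$ is linear by inspection, positive since each diagonal compression of a positive block operator matrix is positive and each $\Phi_k$ is positive, and unital because $\Psi(1_{\widetilde{\mathscr H}})=\sum_k\Phi_k(1_{\mathscr H})=1_{\mathscr K}$. I would then take $A=\diag(A_1,\ldots,A_n)\in\mathscr B(\widetilde{\mathscr H})^{+}$, whose spectrum is $\bigcup_k\spe(A_k)\subset[0,\infty)$, and apply \eqref{eq3.6} to the pair $(\Psi,A)$. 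Since $A$ is block diagonal, so are $f(A)$, $|A-\Psi(A)|$ and $f(|A-\Psi(A)|)$, with $k$-th diagonal entries $f(A_k)$, $|A_k-\sum_j\Phi_j(A_j)|$ and $f(|A_k-\sum_j\Phi_j(A_j)|)$ respectively; the action of $\Psi$ on these block-diagonal operators therefore produces exactly the sums $\sum_k\Phi_k(f(A_k))$ and $\sum_k\Phi_k\bigl(f(|A_k-\sum_j\Phi_j(A_j)|)\bigr)$, which combine into the claimed inequality.

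For the converse I would specialize to $n=2$ with $\Phi_1(X)=\alpha X$, $\Phi_2(X)=(1-\alpha)X$, $\alpha\in[0,1]$; these are positive linear maps satisfying $\Phi_1(1_{\mathscr H})+\Phi_2(1_{\mathscr H})=1_{\mathscr H}$, and substituting them into the hypothesized inequality reproduces verbatim the defining inequality \eqref{eq2.1} of operator superquadraticity.

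The main obstacle is interpretational rather than algebraic: $A$ acts on $\widetilde{\mathscr H}$ while $\Psi(A)$ acts on $\mathscr K$, so the symbol $|A-\Psi(A)|$ in \eqref{eq3.6} requires the two spaces to be identified. I would handle this as in the preceding Theorem, by fixing a unitary identification of $\mathscr H$ with $\mathscr K$ (hence of $\widetilde{\mathscr H}$ with $\mathscr K^{n}$) before forming the subtraction; alternatively, one may simply re-run the Stinespring argument used in the preceding Theorem directly for $\Psi$ and $A$, which is available because the commutative $C^{*}$-algebra generated by $A$ and $1_{\widetilde{\mathscr H}}$ is $\ast$-isomorphic to $C(\spe(A))$, so that $\Psi$ restricted to it is automatically completely positive.
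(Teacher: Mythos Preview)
Your argument is correct and is precisely the natural way to fill in what the paper leaves unsaid: the paper's own proof of this corollary consists only of the sentence ``The proof is obvious.'' Your construction of the single positive unital map $\Psi$ on $\mathscr{B}(\mathscr{H}^n)$ and application of \eqref{eq3.6} to the block-diagonal operator $A=\diag(A_1,\ldots,A_n)$ is the standard reduction, and your converse via $\Phi_1=\alpha\,\mathrm{id}$, $\Phi_2=(1-\alpha)\,\mathrm{id}$ recovers \eqref{eq2.1} on the nose.

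One point worth flagging explicitly: the preceding Theorem is stated under the hypothesis $\dim\mathscr{H}=\dim\mathscr{K}$, whereas your map $\Psi$ goes from $\mathscr{B}(\mathscr{H}^n)$ to $\mathscr{B}(\mathscr{K})$, so a literal appeal to \eqref{eq3.6} needs $\dim\mathscr{H}^n=\dim\mathscr{K}$. This is automatic when $\mathscr{H}$ is infinite-dimensional, but fails in finite dimensions. You already note this interpretational issue, and your suggested remedy --- rerunning the Stinespring argument directly for $\Psi$, which is completely positive on the commutative $C^*$-algebra generated by the block-diagonal $A$ --- is exactly right and sidesteps the dimension hypothesis entirely. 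The paper's own proof of \eqref{eq3.6} is already informal on the same point (the passage from $|\rho(A)-P\rho(A)P|$ to $\rho(|A-\Phi(A)|)$ requires the very identification you mention), so you are not introducing any new gap; you are simply being more careful than the source.
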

\begin{proof}
	The proof is obvious.
\end{proof}


\vspace{0.2cm}

\end{document}